\newtheorem{theorem}{Theorem}[section]
\newtheorem{lemma}[theorem]{Lemma}
\theoremstyle{definition}
\newtheorem{Prop}[theorem]{Proposition}
\newtheorem{Cor}[theorem]{Corollary}
\theoremstyle{remark}
\newtheorem{remark}[theorem]{Remark}
\numberwithin{equation}{section}
\newcommand\bbn{{\mathbb N}}
\newcommand\bbz{{\mathbb Z}}
\newcommand\bbc{{\mathbb C}}
\newcommand\aut{\mbox{Aut}}
\newcommand\ind{\mbox{ind}\,}
\newcommand\ed{\mbox{End}}
\newcommand\nd{{\noindent}}
\newcommand\mc{{\mathcal{C}}}
\begin{document}

\title{Hall algebras associated to triangulated categories, II: almost associativity}

\author{Fan Xu}
\address{Department of Mathematics, Tsinghua University, Beijing 100084, P.R.China}

\email{fanxu@mail.tsinghua.edu.cn}
\thanks{The research was
supported in part by the Ph.D. Programs Foundation of Ministry of
Education of China (No. 200800030058)}

\subjclass[2000]{Primary  16G20, 17B67; Secondary  17B35, 18E30}

\date{October 28, 2007. Last modified: October 09, 2009.}

\dedicatory{Dedicated to Professor YingBo Zhang.}

\keywords{2-periodic triangulated category, enveloping algebra, Lie
algebra.}

\begin{abstract}
By using the approach in \cite{XX2006} to Hall algebras arising in
homologically finite triangulated categories, we find an `almost'
associative multiplication structure for indecomposable objects in a
2-periodic triangulated category. As an application, we give a new
proof of the theorem of Peng and Xiao in \cite{PX2000} which
provides a way of realizing symmetrizable Kac-Moody algebras and
elliptic Lie algebras via 2-periodic triangulated categories.
\end{abstract}

\maketitle

\section*{Introduction}
Let $\mathcal{U}$ be the universal enveloping algebra of a simple
Lie algebra of type $A, D$ or $E$ over the field of rational numbers
$\mathbb{Q}.$ There are many interesting results involving the
categorification and the geometrization of $\mathcal{U}.$ The work
of Gabriel \cite{Gabriel1972} strongly suggested the possibility of
the categorification. He showed that there exists a bijection
between isomorphism classes of all indecomposable modules over a
hereditary algebra of Dynkin type and the positive roots of the
corresponding semisimple Lie algebra. In \cite{Ringel1990}, Ringel
explicitly realized the positive part of $\mathcal{U}$ through the
Hall algebra approach. A different but somewhat parallel realization
was given by Lusztig. He showed that the negative part of
$\mathcal{U}$ can be geometrically realized by using constructible
functions on affine spaces of representations of a preprojective
algebra in \cite{Lusztig2000}. One may naturally consider to recover
the whole Lie algebras and the whole (quantized) enveloping algebras
\cite{Ringel1990}.

Nakajima \cite{Nakajima1998} showed that an arbitrarily large
finite-dimensional quotient of $\mathcal{U}$ can be realized in
terms of the homology of a triple variety. A different construction
was given by Lusztig  in \cite{Lusztig2000} in terms of
constructible functions on the triple variety. On the other hand,
Peng and Xiao \cite{PX2000} defined a Lie bracket between two
isomorphism classes of indecomposable objects in a $k$-additive
triangulated category with the translation functor $T$ satisfying
$T^2=1$ for a finite field $k$ with the cardinality $q$. It induces
a Lie algebra over $\bbz/(q-1)$, while it is still unknown which
associative multiplication induces the Lie bracket over
$\bbz/(q-1)$. However, it seems to be hopeless to realize the whole
enveloping algebra by the constructions of Nakajima
\cite{Nakajima1998} and Lusztig \cite{Lusztig2000}.

Recently, To\"en gave a multiplication formula which defines an
associative algebra (called the derived Hall algebra) corresponding
to a dg category \cite{Toen2005}. In \cite{XX2006}, we extended to
prove that To\"en's formula can be applied to define an associative
algebra for any triangulated category with some homological
finiteness conditions. Unfortunately, a 2-periodic triangulated
category does not satisfy these homological finiteness conditions in
general. Hence, To\"en's formula can not supply the realization of
quantum groups. However, the approach in \cite{XX2006} strongly
suggests the possibility to construct an associative multiplication
over $\bbz/(q-1)$.

Inspired by the method discussed in \cite{XX2006},  in this paper,
we prove that there exists an `almost' associative multiplication
over $\bbz[\frac{1}{q}]/(q-1)$ for isomorphism classes of
indecomposable objects in a 2-periodic triangulated category
(Corollary \ref{almostassociativity}). The associativity of the
multiplication heavily depends on the choice of structure constants
(Hall numbers) for defining the multiplication. The key techniques
in this paper are to substitute derived Hall numbers in
\cite{Toen2005} or \cite{XX2006} for Hall numbers in \cite{PX2000}
and introduce new variants associated to indecomposable objects. As
a direct application, we obtain a new proof of the theorem of Peng
and Xiao in \cite{PX2000}, i.e., Theorem \ref{maintheorem} in
Section 3.

\section{The `almost' associativity}
Given a finite field $k$ with $q$ elements, let $\mc_2$ be a
$k$-additive triangulated category with the translation functor
$T=[1]$ satisfying $(1)$  the homomorphism space
$\mathrm{Hom}_{\mc_2}(X,Y)$ for any two objects $X$ and $Y$ in $\mc$
is a finite dimensional $k$-space, $(2)$ the endomorphism ring $\ed
X$ for any indecomposable object $X$ is finite dimensional local
$k$-algebra and $(3)$ $T^2\cong 1$. Then the category $\mc_2$ is
called a 2-periodic triangulated category. For any $M\in \mc_2$, we
set $\mathrm{\underline{dim}}M$ to be the canonical image of $M$ in
the Grothendieck group of $\mc_2$. Throughout this paper, we assume
$\mc_2$ is proper, i.e., for any nonzero indecomposable object $X$
in $\mc_2$, $\mathrm{\underline{dim}}X\neq 0.$ By $\mbox{ind}\mc_2$
we denote the set of representatives of isomorphism classes of all
indecomposable objects in $\mc_2.$ For any indecomposable object
$X\in \mc_2$, we set
$d(X)=\mathrm{dim}_{k}(\mathrm{End}X/\mathrm{radEnd}X).$ Recall that
(see \cite[Lemma 8.1]{PX2000})
$$
|\mathrm{Aut}X|=|\mathrm{radEnd}X|(q^{d(X)}-1).
$$ For any $X, Y$ and $Z$ in $\mc_2$, we will use $fg$ to denote the
composition of morphisms $f:X\rightarrow Y$ and $g:Y\rightarrow Z,$
and $|A|$ to denote the cardinality of a finite set $A.$ Given
$X,Y;L\in \mc_2,$ put
$$ W(X,Y;L)=\{(f,g,h)\in \hom(X,L)\times
\hom(L,Y)\times(Y,X[1])\mid$$$$
X\xrightarrow{f}L\xrightarrow{g}Y\xrightarrow{h}X[1] \mbox{ is a
triangle}\}.$$ \\
There is a natural action of $\aut X\times \aut Y$ on $W(X,Y;L).$
The orbit space is denoted by $V(X,Y;L).$ The orbit of $(f,g,h)\in
W(X,Y;L)$ is denoted by $(f,g,h)^{\wedge}$. Then
$$(f,g,h)^{\wedge}=\{(af,gc^{-1},ch(a[1])^{-1})\mid (a,c)\in \aut X\times \aut Y\}.$$
We also write $F_{XY}^{L}=|V(X,Y;L)|.$  Throughout this section, we
fix a triple pair $(X, Y, Z)$ such that $X, Y, Z$ are nonzero
indecomposable objects in $\mc_2$ and none of the following
conditions holds
\begin{enumerate}
    \item $ X\cong Z\cong Y[1]$,
    \item $ X\cong Y\cong Z[1]$,
    \item $ Y\cong Z\cong X[1]$.
\end{enumerate}

\begin{lemma}\label{mainlemma}
Let $X,Y,Z$ and $M\neq 0$ be in $\mc_2$ with $X,Y, Z$
indecomposable and $M\oplus Z\ncong X\oplus Y.$ Then we have
$$\frac{F_{XY}^{M\oplus
Z}}{|\mathrm{Aut}Z|}\in \bbz[\frac{1}{q}].$$ where
$\bbz[\frac{1}{q}]$ is the polynomial ring for $\frac{1}{q}$ with
coefficients in $\bbz.$
\end{lemma}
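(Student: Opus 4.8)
The plan is to analyze the quantity $F_{XY}^{M\oplus Z}$ combinatorially by counting triangles with middle term $M\oplus Z$, and to exhibit the stated rationality by extracting powers of $q$ from the automorphism groups involved. The key observation I would exploit is that $|\mathrm{Aut}Z| = |\mathrm{radEnd}Z|(q^{d(Z)}-1)$ by the recalled formula from \cite[Lemma 8.1]{PX2000}, so the obstruction to lying in $\bbz[\frac{1}{q}]$ is precisely the factor $(q^{d(Z)}-1)$ in the denominator. Thus the heart of the proof is to show that $(q^{d(Z)}-1)$ divides $F_{XY}^{M\oplus Z}$ (up to the harmless unit $|\mathrm{radEnd}Z|$, whose order is a power of $q$ times a unit, and in any case contributes only powers of $q$). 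Equivalently, I want to produce a free action of a group of order divisible by $(q^{d(Z)}-1)$ on the orbit set $V(X,Y;M\oplus Z)$, or on a closely related counting set, so that the quotient has integer size after inverting $q$.

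First I would set up the counting set $W(X,Y;M\oplus Z)$ and recall that $F_{XY}^{M\oplus Z} = |V(X,Y;M\oplus Z)| = |W(X,Y;M\oplus Z)|/(|\mathrm{Aut}X|\,|\mathrm{Aut}Y|)$, since the $\aut X\times \aut Y$ action is free (this freeness is standard from the uniqueness part of the axioms for triangulated categories, and I would verify it holds under the genericity hypotheses on $(X,Y,Z)$). The central idea is then to use the direct summand $Z$ of the middle term to manufacture extra symmetry: there is an action of $\aut Z$, or more precisely of a suitable subgroup reflecting scaling on the $Z$-factor, on the set of triangles with middle term $M\oplus Z$. Because $Z$ appears as a genuine summand and the excluded isomorphisms among $X,Y,Z$ are forbidden by hypothesis, this scaling action should be free on the relevant set, and its orbits account for a factor of $(q^{d(Z)}-1)$.

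The main technical work, and the step I expect to be the principal obstacle, is establishing that this auxiliary $\aut Z$-type action is genuinely free (no nontrivial stabilizers) on the set being counted. Freeness can fail exactly when $Z$ is isomorphic to $X$, $Y$, a shift thereof, or when a morphism can be ``absorbed'' through an automorphism of $M\oplus Z$ that mixes the $Z$-summand with $M$; the condition $M\oplus Z\ncong X\oplus Y$ together with the standing genericity assumptions on the triple is exactly what rules out the degenerate configurations. I would isolate the possible stabilizer elements, show each forces one of the excluded isomorphisms, and thereby conclude freeness. Once freeness is secured, the orbit-counting formula gives $F_{XY}^{M\oplus Z} = (q^{d(Z)}-1)\cdot N$ for some quantity $N$ that, after tracking the remaining automorphism orders (all of which are products of $(q^{d(\cdot)}-1)$ factors and powers of $q$ coming from the local radical parts), manifestly lies in $\bbz[\frac{1}{q}]$. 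Dividing by $|\mathrm{Aut}Z| = |\mathrm{radEnd}Z|(q^{d(Z)}-1)$ cancels the critical $(q^{d(Z)}-1)$ factor and leaves only powers of $q$ in the denominator, yielding the claim. I would pay particular attention to the case analysis distinguishing whether $Z$ occurs among the other data, since that is where the freeness argument is most delicate and where the hypotheses of the lemma are doing their essential work.
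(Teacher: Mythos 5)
Your overall architecture --- introduce an action of $\aut Z$ coming from the direct summand $Z$ of the middle term and apply orbit counting --- is exactly the paper's strategy, but the linchpin of your argument, the claim that this action is \emph{free}, is false, and the hypotheses of the lemma are not what rules out nontrivial stabilizers. Writing a representative triangle with $f=(f_1\ f_2):X\to M\oplus Z$ and $g$ given by components $g_1:M\to Y$, $g_2:Z\to Y$, and letting $d\in\aut Z$ act by $f_2\mapsto f_2d^{-1}$, $g_2\mapsto dg_2$, the stabilizer of an orbit works out to $\{d\in\aut Z\mid dg_2=g_2b \mbox{ and } g_1b=g_1 \mbox{ for some } b\in\aut Y\}$. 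Any $d=1-d'$ with $d'\in\mathrm{End}Z$ nilpotent and $d'g_2=0$ lies in this stabilizer (take $b=1$), and such $d'$ exist whenever the radical of $\mathrm{End}Z$ meets the annihilator of $g_2$ --- no isomorphism among $X,Y,Z,M$ or their shifts is forced. So you cannot extract the factor $q^{d(Z)}-1$ by exhibiting a free action, and the step you yourself flag as the principal obstacle ("each stabilizer element forces one of the excluded isomorphisms") is where the plan breaks down.

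What saves the argument, and what the paper actually proves, is the weaker statement that every stabilizer $G_{\overline{\alpha}}$ is in bijection with a $k$-vector space, hence has order a power of $q$. The hypothesis $M\oplus Z\ncong X\oplus Y$ enters precisely here: it forces $g_1\neq 0$ and $g_2\neq 0$, which (using that $Y$ and $Z$ are indecomposable with local endomorphism rings) forces every stabilizer element to have the form $1-d'$ with $d'$ nilpotent, and the set of admissible $d'$ is closed under addition. Then
$$\frac{F_{XY}^{M\oplus Z}}{|\mathrm{Aut}Z|}=\sum_{\overline{\alpha}\in \overline{V}(X,Y;M\oplus Z)}\frac{1}{|G_{\overline{\alpha}}|}$$
is a sum of terms of the form $q^{-n}$ and so lies in $\bbz[\frac{1}{q}]$, which is all the lemma asserts; no divisibility of $F_{XY}^{M\oplus Z}$ itself by $q^{d(Z)}-1$ over $\bbz$ is needed. (A secondary inaccuracy: the action of $\aut X\times\aut Y$ on $W(X,Y;L)$ is likewise not free in general; $F_{XY}^{L}$ is defined as the orbit count $|V(X,Y;L)|$, not as $|W(X,Y;L)|$ divided by $|\aut X|\cdot|\aut Y|$.)
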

\begin{proof}
We define the action of $\aut Z$ on $V(X,Y;M\oplus Z)$ as follows.
For any $\alpha=(\left(%
\begin{array}{cc}
  f_1 & f_2 \\
\end{array}%
\right),\left(%
\begin{array}{c}
  g_1 \\
  g_2 \\
\end{array}%
\right), h)^{\wedge}\in V(X,Y; M\oplus Z)$ and $d\in \aut Z,$ define
$$
d.(\left(%
\begin{array}{cc}
  f_1 & f_2 \\
\end{array}%
\right),\left(%
\begin{array}{c}
  g_1 \\
  g_2 \\
\end{array}%
\right), h)^{\wedge}=(\left(%
\begin{array}{cc}
  f_1 & f_2d^{-1} \\
\end{array}%
\right),\left(%
\begin{array}{c}
  g_1 \\
  dg_2 \\
\end{array}%
\right), h)^{\wedge}.
$$
The orbit space is denoted by $\overline{V}(X,Y; M\oplus Z).$ Let
$$
G_{\overline{\alpha}}=\{d\in \mathrm{Aut}Z\mid (\left(%
\begin{array}{cc}
  f_1 & f_2d^{-1} \\
\end{array}%
\right),\left(%
\begin{array}{c}
  g_1 \\
  dg_2 \\
\end{array}%
\right), h)^{\wedge}=(\left(%
\begin{array}{cc}
  f_1 & f_2 \\
\end{array}%
\right),\left(%
\begin{array}{c}
  g_1 \\
  g_2 \\
\end{array}%
\right), h)^{\wedge}\}.
$$
 Then by definition,
$G_{\overline{\alpha}}$ is equal to
$$
\{d\in \mathrm{Aut}Z\mid(\left(%
\begin{array}{cc}
  f_1 & f_2d^{-1} \\
\end{array}%
\right),\left(%
\begin{array}{c}
  g_1 \\
  dg_2 \\
\end{array}%
\right), h)=(\left(%
\begin{array}{cc}
  af_1 & af_2 \\
\end{array}%
\right),\left(%
\begin{array}{c}
  g_1b^{-1} \\
  g_2b^{-1} \\
\end{array}%
\right), bh(a[1])^{-1})
$$
$$
\mbox{ for some }(a,b)\in \mathrm{Aut}X\times \mathrm{Aut}Y\},
$$ or equivalently, equal to $$\{d\in \mathrm{Aut}Z\mid dg_2=g_2b, g_1b=g_1, af_1=f_1\mbox{ and } f_2d=af_2$$$$ \mbox{ for some }(a,b)\in \mathrm{Aut}X\times \mathrm{Aut}Y\}. $$Given $(b, d)\in \mathrm{Aut}Y\times \mathrm{Aut}Z$ such that
$dg_2=g_2b$ and $g_1b=g_1$,  we have the following diagram with the
middle square being commutative
$$
\xymatrix{X\ar[rr]^-{\left(%
\begin{array}{cc}
  f_1 & f_2 \\
\end{array}%
\right)}&&M\oplus Z\ar[rr]^{\left(%
\begin{array}{c}
  g_1 \\
  g_2 \\
\end{array}%
\right)}&& Y\ar[rr]^h&& X[1]\\
X\ar[rr]_-{\left(%
\begin{array}{cc}
  f_1 & f_2 \\
\end{array}%
\right)}&&M\oplus Z\ar[u]_{\left(
                            \begin{array}{cc}
                              1 & 0 \\
                              0 & d \\
                            \end{array}
                          \right)
}\ar[rr]_-{\left(%
\begin{array}{c}
  g_1 \\
  g_2 \\
\end{array}%
\right)}&& Y\ar[rr]_{h}\ar[u]_{b}&& X[1]}
$$
By the axioms of triangulated categories, there exists $a\in
\mathrm{Aut} X$ such that $af_1=f_1$ and $f_2d=af_2.$ Hence,
$$G_{\overline{\alpha}}=\{d\in \mathrm{Aut}Z\mid dg_2=g_2b \mbox{ and } g_1b=g_1 \mbox{ for some }b\in \mathrm{Aut}Y \}.$$
 The map $g_1$ naturally induces a triangle
$$
\xymatrix{M\ar[r]^{g_1}&Y\ar[r]^-{h_1}& C(g_1)\ar[r]&M[1]}
$$
where $C(g_1)$ is the cone of the map $g_1.$ Let $b'=1-b.$ Then we
have
$$\{b'\mid b\in \mathrm{Aut}Y\mid g_1b=g_1\}=\{b'\in \mathrm{End}Y\mid b'\in h_1\mathrm{Hom}(C(g_1), Y), 1-b'\in \mathrm{Aut}Y\}.$$
We claim that for any $t\in \mathrm{Hom}(C(g_1), Y),$ both $th_1$
and $h_1t$ are nilpotent. Assume that $h_1t$ is not nilpotent, then
it is invertible since $Y$ is indecomposable. This implies $g_1=0.$
Consider the following diagram
$$
\xymatrix{M\ar[rr] \ar@{.>}[d]_{u}&& M\ar[rr]\ar[d]_{v}&& 0\ar[d]\ar[rr]&&M[1]\\X\ar[rr]^-{\left(%
\begin{array}{cc}
  f_1 & f_2 \\
\end{array}%
\right)}&&M\oplus Z\ar[rr]^{\left(%
\begin{array}{c}
  0 \\
  g_2 \\
\end{array}%
\right)}&& Y\ar[rr]^h&& X[1]}
$$
where $v=\left(
           \begin{array}{cc}
             1 & 0 \\
           \end{array}
         \right)
$. The middle square is commutative, then there exists $u:
M\rightarrow X$ such that the diagram is commutative. This implies
$uf_1=1_M$. However, $X$ is indecomposable and then $u$ is an
isomorphism. The morphism $g_2$ is also an isomorphism by the
octahedral axiom as showed in the following diagram.
$$
\xymatrix{&&Z\ar[d]\ar@{=}[rr]&&Z\ar[d]^{g_2}\\X\ar@{=}[d]\ar[rr]^-{\left(%
\begin{array}{cc}
  f_1 & f_2 \\
\end{array}%
\right)}&&M\oplus Z\ar[d]\ar[rr]^{\left(%
\begin{array}{c}
  0 \\
  g_2 \\
\end{array}%
\right)}&& Y\ar[d]\ar[rr]^h&&
X[1]\\X\ar[rr]^{f_1}&&M\ar[rr]&&C(g_2)\ar[rr]&&X[1]}
$$
where $C(g_2)$ is the cone of $g_2.$ It contradicts to the
assumption $M\oplus Z\cong X\oplus Y.$ Hence, $g_1\neq 0$. In the
same way, we have $g_2\neq 0.$ This implies that for any $t\in
\mathrm{Hom}(C(g_1), Y),$ both $th_1$ and $h_1t$ are nilpotent. We
have $1-h_1t\in \mathrm{Aut}Y.$ Therefore, $G_{\overline{\alpha}}$
is isomorphic to
$$G'_{\overline{\alpha}}=\{d'\in \mathrm{End}Z\mid  1-d'\in \mathrm{Aut}Z, d'g_2=g_2b' \mbox{ for
some }b'\in h_1\mathrm{Hom}(C(g_1),Y)\}.$$ Let $d'\in \mathrm{End}Z$
satisfy $d'g_2=g_2b' \mbox{ for some }b'\in h_1\hom(C(g_1),Y)\}$.
Since $b'$ is nilpotent, we assume $(b')^k=0$ for some $k\in \bbn$,
then $(d')^kg_2=0$. However, $Z$ is indecomposable and $g_2\neq 0$.
We deduce that $d'$ is nilpotent and then $1-d'\in \mathrm{Aut}Z.$
Hence, $G'_{\overline{\alpha}}$ is a vector space. Finally, we
obtain
$$
\frac{F_{XY}^{M\oplus
Z}}{|\mathrm{Aut}Z|}=\sum_{\overline{\alpha}\in \overline{V}(X,Y;
M\oplus Z)}\frac{1}{|G_{\overline{\alpha}}|}\in \bbz[\frac{1}{q}].
$$
\end{proof}
Note that the conclusion of the lemma may not hold if $M\oplus
Z\cong X\oplus Y$ in Lemma \ref{mainlemma}, then . Indeed, if
$X\ncong Y$, $F^{X\oplus Y}_{XY}=|\mathrm{Hom}(X, Y)|=q^n$ where
$n=\mathrm{dim}_{k}\mathrm{Hom}(X, Y).$ If $X\cong Y$, $F^{X\oplus
Y}_{XY}=|\mathrm{End}X|+|\mathrm{radEnd}X|.$

Denote by $(X,Y)_Z$ the subset of $\mathrm{Hom}_{\mc_2}(X,Y)$
consisting of the morphisms whose cones are isomorphic to $Z.$ The
following proposition (\cite[Proposition 2.5]{XX2006}) also holds
for 2-periodic triangulated categories.
\begin{Prop}\label{mainproposition}
For any $Z,L,M\in \mc_2,$ we have
\begin{enumerate}
    \item Any $\alpha=(l,m,n)^{\wedge}\in V(Z,L;M)$ has the representative
of the form:
\begin{equation}\nonumber
\xymatrix{Z\ar[rr]^{\left(%
\begin{array}{c}
  0 \\
  l_2 \\
\end{array}%
\right)}&& M\ar[rr]^{\left(
                       \begin{array}{cc}
                         0 & m_2 \\
                       \end{array}
                     \right)
}&& L\ar[rr]^{\left(%
\begin{array}{cc}
  n_{11} & 0 \\
  0 & n_{22} \\
\end{array}%
\right)}&& Z[1]}
\end{equation}
where $Z=Z_1(\alpha)\oplus Z_2(\alpha),$ $L=L_1(\alpha)\oplus
L_2(\alpha)$, $n_{11}$ is an isomorphism between $L_1(\alpha)$ and
$Z_1(\alpha)[1]$ and $n_{22}\in
\mathrm{radHom}(L_2(\alpha),Z_2(\alpha)[1]).$
    \item $$
\frac{|(M,L)_{Z[1]}|}{|\mathrm{Aut}L|}=\sum_{\alpha\in
V(Z,L;M)}\frac{|\mathrm{End}
L_1(\alpha)|}{|n(\alpha)\mathrm{Hom}(Z[1],L)||\mathrm{Aut}L_1(\alpha)|}
$$
and
$$
\frac{|(Z,M)_{L}|}{|\mathrm{Aut}Z|}=\sum_{\alpha\in
V(Z,L;M)}\frac{|\mathrm{End}
Z_1(\alpha)|}{|\mathrm{Hom}(Z[1],L)n(\alpha)||\mathrm{Aut}Z_1(\alpha)|}
$$
where $n(\alpha)=\left(%
\begin{array}{cc}
  n_{11} & 0 \\
  0 & n_{22} \\
\end{array}%
\right)$.
\end{enumerate}
\end{Prop}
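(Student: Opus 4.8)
The plan is to prove Proposition 2.2 by reducing any triangle in $V(Z,L;M)$ to a canonical "block-diagonal" form and then counting automorphism-orbits carefully, exactly as one counts Hall numbers in the hereditary case. Let me think about what needs to happen.

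For part (1), given $\alpha = (l,m,n)^\wedge \in V(Z,L;M)$, I want to exploit the freedom in $\aut Z \times \aut L$ to put the triangle $Z \xrightarrow{l} M \xrightarrow{m} L \xrightarrow{n} Z[1]$ into a normal form. The key is the morphism $n : L \to Z[1]$. Since we're in a triangulated category with finite-dimensional $\mathrm{Hom}$-spaces and indecomposables having local endomorphism rings, I can invoke a decomposition of $n$ into an isomorphism part and a "radical" part. Concretely, the idea is: decompose $L = L_1 \oplus L_2$ and $Z[1] = Z_1[1] \oplus Z_2[1]$ so that, with respect to these decompositions, $n$ becomes $\mathrm{diag}(n_{11}, n_{22})$ where $n_{11}$ is an isomorphism and $n_{22}$ lies in the radical. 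This is essentially a "splitting off the iso part of a matrix over a semiperfect setting" argument, where one uses the local endomorphism rings of the indecomposable summands and Gaussian-elimination-type row/column operations realized by elements of $\aut L$ and $\aut Z$ to kill off the off-diagonal blocks. Once $n$ is normalized, the rotation axiom forces $l$ and $m$ into the stated shapes $\binom{0}{l_2}$ and $(0\ m_2)$: the $L_1$-summand is "cancelled" against $Z_1[1]$ through the isomorphism $n_{11}$, so it contributes trivially to $l$ and $m$.

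For part (2), I would fix the normal form from (1) and count. The strategy is an orbit-counting / fibration argument: the stabilizer of $\alpha$ under $\aut Z \times \aut L$ governs the passage between $|(M,L)_{Z[1]}|$, $|(Z,M)_L|$, and the sum over $V(Z,L;M)$. The plan is to set up, for the first identity, the map sending a morphism $M \to L$ with cone $Z[1]$ to its orbit $\alpha$, and to compute the size of each fiber. Using $|\aut X| = |\mathrm{radEnd}X|(q^{d(X)}-1)$ (stated in the excerpt) for the indecomposable pieces, and the fact that the "essential" freedom in lifting is controlled by $n_{11}$ being an isomorphism versus $n_{22}$ being radical, the fiber size works out to $|\mathrm{End}L_1(\alpha)| / (|n(\alpha)\mathrm{Hom}(Z[1],L)|\,|\aut L_1(\alpha)|)$ times $|\aut L|$; dividing by $|\aut L|$ gives the claimed formula. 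The second identity is proved symmetrically by playing the same game with $Z$ and the column map, using $|\mathrm{Hom}(Z[1],L)n(\alpha)|$ in place of $|n(\alpha)\mathrm{Hom}(Z[1],L)|$ and the summands $Z_1(\alpha)$ in place of $L_1(\alpha)$.

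The main obstacle I anticipate is part (1): rigorously producing the block decomposition that simultaneously diagonalizes $n$ and forces the zero-blocks in $l$ and $m$. This requires more than linear algebra over a field, since the entries are morphisms between possibly non-isomorphic indecomposables; one must argue that the isomorphism part of $n$ splits off cleanly — i.e., that an iso $n_{11} : L_1 \to Z_1[1]$ can be peeled off as a direct summand of the whole triangle — and that the complementary block $n_{22}$ genuinely lands in the radical. I would handle this by an inductive Gaussian-elimination over the indecomposable summands of $L$ and $Z[1]$: whenever a matrix entry of $n$ between indecomposables is an isomorphism (detected via the local endomorphism rings), split it off together with the corresponding summands and apply the triangulated analogue of the five/octahedral lemma to remove the matching summand from $l$ and $m$; what remains has all entries in the radical, giving $n_{22}$. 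Since this is precisely the statement cited as \cite[Proposition 2.5]{XX2006} and asserted to carry over verbatim to the $2$-periodic setting, the remaining work is to check that no step of that argument used any homological finiteness beyond the three defining properties of a $2$-periodic triangulated category, which it does not.
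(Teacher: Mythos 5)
Your proposal is consistent with the paper's treatment of this statement: the paper gives no proof at all, but simply cites \cite[Proposition 2.5]{XX2006} and asserts that the argument carries over to the 2-periodic setting, which is exactly the reduction you make in your final paragraph. Your sketch of the underlying argument (block-diagonalizing $n$ by Gaussian elimination over the local endomorphism rings of the indecomposable summands, splitting off the contractible summand $L_1\xrightarrow{n_{11}}Z_1[1]$ to force the zero blocks in $l$ and $m$, and then computing the stabilizer of the middle map --- which is $(1+n(\alpha)\mathrm{Hom}(Z[1],L))\cap \mathrm{Aut}L$, whose size is governed by $|n(\alpha)\mathrm{Hom}(Z[1],L)|\cdot|\mathrm{Aut}L_1(\alpha)|/|\mathrm{End}L_1(\alpha)|$) is a faithful outline of the cited proof, so there is nothing to correct beyond noting that the fiber-size computation in your part (2) is asserted rather than derived.
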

For $\alpha\in (l,m,n)^{\wedge},$  define
$$
s(\alpha)=\mathrm{dim}_{k}n\mathrm{Hom}(Z[1],L),\quad
t(\alpha)=\mathrm{dim}_{k}\mathrm{Hom}(Z[1],L)n.
$$

For any objects $U, V$ and $W$ in $\mc_2,$ define
\begin{enumerate}
    \item $g_{UV}^{W}:=\frac{|(U,W)_V|}{|\aut
    U|}$, if $U\ncong W\oplus V[1]$,
    \item $
\bar{g}_{UV}^{W}:=\frac{|(W,V)_{U[1]}|}{|\aut V|}$, if $V\ncong
W\oplus U[1]$,
    \item $
g_{W\oplus V[1], V}^{W}:=|\mathrm{Hom}(W, V[1])|\cdot\frac{|(W\oplus
V[1],W)_V|}{|\aut (W\oplus V[1])|}=\frac{1}{|\aut V|}$,
    \item $\bar{g}_{U, W\oplus
U[1]}^{W}:=|\mathrm{Hom}(U[1], W)|\cdot\frac{|(W, W\oplus
U[1])_{U[1]}|}{|\aut(W\oplus U[1])|}=\frac{1}{|\aut U|}.$
  \end{enumerate}

Different from \cite{PX2000}, we will consider the image of
numbers in $\bbz[\frac{1}{q}]/(q-1)$ instead of $\bbz/(q-1)$ where
$\bbz[\frac{1}{q}]$ is the polynomial ring for $\frac{1}{q}$ with
coefficient in $\bbz$.

We have the following corollary of Proposition
\ref{mainproposition}.
\begin{Prop}\label{numbers}
Let $X,Y,Z, L, L'$ and $M$ be in $\mc_2$ with $X,Y, Z$ and $M\neq 0$
being indecomposable. Then  we have the following properties.

\begin{enumerate}
        \item  If $L\ncong M\oplus Z[1]$ and $L\neq 0,$ then the number
        $g_{XY}^{L}g_{ZL}^M$ belongs to $\bbz[\frac{1}{q}]$.
        \item  If $L\cong M\oplus Z[1]$ and $L\ncong X\oplus Y,$ then the number
        $g_{XY}^{L}g_{ZL}^M$ belongs to $\bbz[\frac{1}{q}]$.
        \item  If $L'\ncong M\oplus Y[1]$ and $L'\neq 0,$ then the number
        $g_{ZX}^{L'}g_{L'Y}^M$ belongs to $\bbz[\frac{1}{q}]$.
        \item  If $L'\cong M\oplus Y[1]$ and $L'\ncong X\oplus Z,$
        then $g_{ZX}^{L'}g_{L'Y}^M$ belongs to $\bbz[\frac{1}{q}]$.
        \item If $X\ncong Y$ and $X\ncong Y[1],$ then the numbers $g_{XY}^{X\oplus Y}g_{X[1],X\oplus Y}^Y-g_{X[1], X}^{0}g_{0, Y}^Y$ and $g_{XY}^{X\oplus Y}g^{X}_{Y[1], X\oplus Y}-g_{Y[1], X}^{X\oplus Y[1]}g_{X\oplus Y[1], Y}^X$ belong to
                $\bbz[\frac{1}{q}]$.
        \item If $Z\ncong X$ and $Z\ncong X[1],$ then the number $g_{ZX}^{Z\oplus X}g_{Z\oplus
             X, X[1]}^Z-g_{X, X[1]}^{0}g_{Z,0}^Z$ and $g_{ZX}^{Z\oplus X}g^{X}_{Z\oplus
           X, Z[1]}-g_{X, Z[1]}^{X\oplus Z[1]}g_{Z, X\oplus
              Z[1]}^X$ belong to
                $\bbz[\frac{1}{q}]$.
        \item   $g_{XY}^{L}g_{ZL}^M-g_{XY}^{L}\bar{g}_{ZL}^M\in \bbz[\frac{1}{q}].$ In
        $\bbz[\frac{1}{q}]/(q-1),$ $g_{XY}^{L}g_{ZL}^M-g_{XY}^{L}\bar{g}_{ZL}^M=0.$
        \item   $g_{ZX}^{L'}g_{L'Y}^M-\bar{g}_{ZX}^{L'}g_{L'Y}^M\in \bbz[\frac{1}{q}].$ In $\in \bbz[\frac{1}{q}]/(q-1)$, $g_{ZX}^{L'}g_{L'Y}^M-\bar{g}_{ZX}^{L'}g_{L'Y}^M=0.$

\end{enumerate}
\end{Prop}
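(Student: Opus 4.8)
The plan is to prove each of the eight assertions by reducing them to Lemma~\ref{mainlemma} and Proposition~\ref{mainproposition}, which together control when the relevant Hall-type numbers land in $\bbz[\frac{1}{q}]$. For item (1), since $L\ncong M\oplus Z[1]$, the number $g_{ZL}^M=\frac{|(Z,M)_L|}{|\aut Z|}$ is genuinely a derived Hall number, and I would first observe that $g_{XY}^L=F_{XY}^L$ (the orbit count) is an integer by definition. The factor $g_{ZL}^M$ I would express via the second formula of Proposition~\ref{mainproposition}(2), where each summand has the shape $\frac{|\ed Z_1(\alpha)|}{|\hom(Z[1],L)n(\alpha)||\aut Z_1(\alpha)|}$; using the standard identity $|\aut Z_1|=|\mathrm{radEnd}Z_1|(q^{d(Z_1)}-1)$ recalled in the introduction, each term is visibly a rational function of $q$ whose denominator is a power of $q$ times $(q^{d}-1)$ factors, and I would check that these $(q^d-1)$ factors cancel against $|\ed Z_1|$ so that what remains lies in $\bbz[\frac{1}{q}]$.

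For item (2), the obstruction is that when $L\cong M\oplus Z[1]$ the number $g_{ZL}^M$ is no longer a clean derived Hall number, so I would instead bound $g_{XY}^L$ itself. Here the extra hypothesis $L\ncong X\oplus Y$ is exactly the condition of Lemma~\ref{mainlemma} with $L=M'\oplus Z'$ playing the role of $M\oplus Z$: writing $L\cong M\oplus Z[1]$, Lemma~\ref{mainlemma} gives $\frac{F_{XY}^{M\oplus Z[1]}}{|\aut Z[1]|}\in\bbz[\frac{1}{q}]$, and since $g_{ZL}^M$ is then essentially $\frac{1}{|\aut Z[1]|}$ up to the $|\hom|$ normalization built into case (3) of the definitions, the product telescopes into the quantity controlled by the lemma. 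Items (3) and (4) are the mirror statements obtained by applying the same two arguments to the opposite triangulation (swapping the roles of $X,Y$ with $Z$ and using $\bar g$ in place of $g$); I would simply invoke the symmetry $T^2\cong1$ rather than redo the computation.

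For items (5) and (6), which treat the degenerate triples where the split object $X\oplus Y$ or $Z\oplus X$ appears, the strategy is to compute both products explicitly using the special values recorded in definitions (3) and (4), namely $g_{W\oplus V[1],V}^W=\frac{1}{|\aut V|}$ and $\bar g_{U,W\oplus U[1]}^W=\frac{1}{|\aut U|}$, together with the explicit formulas $F_{XY}^{X\oplus Y}=|\hom(X,Y)|$ (when $X\ncong Y$) and $F^{X\oplus Y}_{XY}=|\ed X|+|\mathrm{radEnd}X|$ (when $X\cong Y$) stated just after Lemma~\ref{mainlemma}. The hypotheses $X\ncong Y$ and $X\ncong Y[1]$ guarantee we are in the non-isomorphic regime, so each factor is a concrete rational function of $q$; I would show that the leading terms of the two products agree so that their difference has all $(q-1)$-type singularities cancel and lies in $\bbz[\frac{1}{q}]$. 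This explicit matching of two closed-form expressions is where I expect the bookkeeping to be heaviest, since one must carefully track the $\hom$-normalizations.

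Finally, items (7) and (8) are the payoff: they assert not merely membership in $\bbz[\frac{1}{q}]$ but vanishing modulo $(q-1)$. The key input is the comparison between the two summation formulas in Proposition~\ref{mainproposition}(2). The difference $g_{XY}^Lg_{ZL}^M-g_{XY}^L\bar g_{ZL}^M$ factors as $g_{XY}^L$ times $\bigl(g_{ZL}^M-\bar g_{ZL}^M\bigr)$, and the two Proposition~\ref{mainproposition}(2) formulas exhibit $g_{ZL}^M$ and $\bar g_{ZL}^M$ as sums over the \emph{same} index set $V(Z,L;M)$ with summands differing only by the factor $\frac{|\ed L_1(\alpha)|}{|\aut L_1(\alpha)|}$ versus $\frac{|\ed Z_1(\alpha)|}{|\aut Z_1(\alpha)|}$ (and correspondingly $s(\alpha)$ versus $t(\alpha)$ in the power of $q$ coming from $|n(\alpha)\hom|$ versus $|\hom\, n(\alpha)|$). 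Since $n_{11}$ is an isomorphism $L_1(\alpha)\xrightarrow{\sim}Z_1(\alpha)[1]$, we have $\ed L_1(\alpha)\cong\ed Z_1(\alpha)$ and hence $|\aut L_1(\alpha)|=|\aut Z_1(\alpha)|$, so the two summands differ only by a factor of $q^{s(\alpha)-t(\alpha)}$. The main obstacle, and the crux of the whole proposition, is then to show that $q^{s(\alpha)-t(\alpha)}\equiv1\pmod{q-1}$, which is automatic since $q\equiv1\pmod{q-1}$ forces every power of $q$ to be $\equiv1$; combined with the already-established $\bbz[\frac{1}{q}]$-integrality this yields the claimed vanishing in $\bbz[\frac{1}{q}]/(q-1)$, and item (8) follows by the same argument on the opposite side.
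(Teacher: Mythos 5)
Your overall plan points at the right ingredients (Lemma~\ref{mainlemma} and Proposition~\ref{mainproposition}), but two of your key mechanisms are not the ones that make the statement true, and as written they fail. First, in item (1) you assert that $g_{XY}^{L}=F_{XY}^{L}$ is an integer ``by definition''; this is false --- $g_{XY}^{L}$ is $|(X,L)_Y|/|\mathrm{Aut}X|$, not the orbit count $|V(X,Y;L)|$. More seriously, your plan to put each summand $\frac{|\mathrm{End}Z_1(\alpha)|}{|\mathrm{Hom}(Z[1],L)n(\alpha)||\mathrm{Aut}Z_1(\alpha)|}$ into $\bbz[\frac{1}{q}]$ by ``cancelling the $(q^d-1)$ factors against $|\mathrm{End}Z_1(\alpha)|$'' cannot work: $|\mathrm{End}Z_1(\alpha)|$ is a power of $q$, so whenever $Z_1(\alpha)\neq 0$ the ratio equals $\frac{q^{d(Z)}}{q^{d(Z)}-1}\notin\bbz[\frac{1}{q}]$ and nothing cancels. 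The actual role of the hypothesis $L\ncong M\oplus Z[1]$ (and of $L\neq 0$ for the factor $g_{XY}^{L}$) is to force $L_1(\alpha)=Z_1(\alpha)=0$ for every $\alpha$ in Proposition~\ref{mainproposition}(1): a nonzero $Z_1(\alpha)$ splits the triangle and yields $L\cong M\oplus Z[1]$. Then every offending ratio is identically $1$ and the sum is $\sum_\alpha q^{-t(\alpha)}$. You never use the hypotheses in this way, and without that observation your arguments for (1) and (3) collapse.

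The same omission undermines (7)--(8). Comparing the two sums of Proposition~\ref{mainproposition}(2) term by term only gives $g_{ZL}^{M}-\bar g_{ZL}^{M}=\sum_\alpha(q^{-s(\alpha)}-q^{-t(\alpha)})\in\bbz[\frac{1}{q}]$ when all $Z_1(\alpha)$ vanish, i.e.\ when $L\ncong M\oplus Z[1]$ and $L\neq 0$; in the excluded cases the common factor is $\frac{q^{d(Z)}}{q^{d(Z)}-1}$, and moreover $g$ and $\bar g$ are then given by the specially normalized definitions (3)--(4), so the sum formulas do not even compute them. These cases must be split off and evaluated directly: for $L\cong M\oplus Z[1]$ one has $g_{ZL}^{M}=\bar g_{ZL}^{M}=\frac{1}{|\mathrm{Aut}Z|}$ exactly (similarly for $L=0$), so the difference is literally zero before any reduction mod $(q-1)$. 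Two smaller gaps: in (2) the quantity controlled by Lemma~\ref{mainlemma} is $F_{XY}^{M\oplus Z[1]}/|\mathrm{Aut}Z|$, whereas you need the weighted sum $\sum_\alpha q^{-s(\alpha)}$ divided by $|\mathrm{Aut}Z|$, which goes through only after noting that $s(\alpha)$ is constant on $\mathrm{Aut}Z$-orbits; and in (5)--(6) the ``matching of leading terms'' you defer is precisely the divisibility $d(X)\mid\dim_k\mathrm{Hom}(X,Y)$ (since $\mathrm{Hom}(X,Y)$ is a module over the local ring $\mathrm{End}X$), which is what makes $\frac{q^{\dim_k\mathrm{Hom}(X,Y)}-1}{q^{d(X)}-1}$ a polynomial in $q$; without it the difference $\frac{|\mathrm{Hom}(X,Y)|-1}{|\mathrm{Aut}X|}$ is not visibly in $\bbz[\frac{1}{q}]$.
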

\begin{proof}
If $L\ncong M\oplus Z[1],$ then by Proposition
\ref{mainproposition}, we have $L_1(\alpha)=0$ and
$$g_{ZL}^{M}=\sum_{\alpha\in
V(Z,L;M)}\frac{1}{|n(\alpha)\mathrm{Hom}(Z[1],L)|}\in
\bbz[\frac{1}{q}].$$ In the same way, $g_{XY}^{L}\in
\bbz[\frac{1}{q}].$ This proves (1). Next, we prove (2). In this
case, $g_{ZL}^{M}=\frac{1}{|\mathrm{Aut} Z|}$. As in the proof of
Lemma \ref{mainlemma}, $g_{XY}^{M\oplus Z[1]}$ is equal to
$$\sum_{\alpha\in V(X, Y; M\oplus Z[1])}\frac{1}{|h(\alpha)\mathrm{Hom}(X[1], Y)|}=\sum_{\overline{\alpha}\in \overline{V}(X, Y; M\oplus Z[1])}\frac{1}{|h(\alpha)\mathrm{Hom}(X[1], Y)|}\frac{|\mathrm{Aut}Z|}{|G_{\overline{\alpha}}|}.$$
This proves (2). The proofs of (3) and (4) are similar. As for (5),
the number $g_{XY}^{X\oplus Y}g_{X[1],X\oplus Y}^Y-g_{X[1],
X}^{0}g_{0, Y}^Y$ is equal to $$\frac{|\mathrm{Hom}(X,
Y)|-1}{|\mathrm{Aut}
X|}=\frac{q^{\mathrm{dim}_{k}\mathrm{Hom}(X,Y)}-1}{|\mathrm{radEnd}
X|(q^{d(X)}-1)}.$$ Since
$\frac{\mathrm{dim}_{k}\mathrm{Hom}(X,Y)}{d(X)}=l_{End
X}(\mathrm{Hom}(X,Y))\in \bbz,$ the number belongs to
$\bbz[\frac{1}{q}]$. The proofs of the rest part of (5) and (6) are
similar. We prove (7). If $L=0$, then
$g_{XY}^{L}g_{ZL}^{M}-g_{XY}^{L}\bar{g}_{ZL}^M=\frac{1}{|\mathrm{Aut}X|}-\frac{1}{|\mathrm{Aut}X|}=0$.
If $L\cong M\oplus Z[1],$ then
$g_{ZL}^M=\bar{g}_{ZL}^M=\frac{1}{|\mathrm{Aut} Z|}.$ Hence,
$g_{XY}^{L}g_{ZL}^{M}-g_{XY}^{L}\bar{g}_{ZL}^M=0$. If $L\ncong
M\oplus Z[1]$ and $L\neq 0,$ then by Proposition
\ref{mainproposition}, we have $g_{XY}^L\in \bbz[\frac{1}{q}]$ and
$g_{ZL}^M-\bar{g}_{ZL}^M=\sum_{\alpha\in
V(Z,L;M)}(\frac{1}{q^{s(\alpha)}}-\frac{1}{q^{t(\alpha)}})\in
\bbz[\frac{1}{q}]$. This proves (7). The proof of (8) is similar.
\end{proof}

\begin{Cor}\label{transitive}
Let $X,Y,Z$ and $M$ be in $\mc_2$ with $X,Y, Z$ and $M\neq 0$ being
indecomposable. Then we have
$$
\sum_{[L], L\in
\mc_2}g_{XY}^{L}g_{ZL}^{M}-\sum_{[L'],L'\in\mc_2}g_{ZX}^{L'}g_{L'Y}^{M}\in
\bbz[\frac{1}{q}]$$ and in $\bbz[\frac{1}{q}]/(q-1),$
$$
 \sum_{[L], L\in
\mc_2}g_{XY}^{L}g_{ZL}^{M}-\sum_{[L'],L'\in\mc_2}g_{ZX}^{L'}g_{L'Y}^{M}
   =\sum_{[L], L\in
\mc_2}g_{XY}^{L}\bar{g}_{ZL}^{M}-\sum_{[L'],
L'\in\mc_2}\bar{g}_{ZX}^{L'}g_{L'Y}^{M}.
$$
\end{Cor}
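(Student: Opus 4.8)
The plan is to prove the two assertions of Corollary \ref{transitive} separately, deriving the integrality statement first and then the congruence in $\bbz[\frac{1}{q}]/(q-1)$, by splitting each of the two sums over isomorphism classes according to whether the intermediate object $L$ (resp. $L'$) is the ``trivial'' summand configuration $M\oplus Z[1]$ (resp. $M\oplus Y[1]$) or not. The point is that the summands over $\mc_2$ are almost all governed by the previous Proposition: for a fixed triple $(X,Y,Z)$ and fixed $M$, the product $g_{XY}^{L}g_{ZL}^{M}$ is nonzero for only finitely many classes $[L]$ (since $\udim L$ and $\udim Z+\udim M$ are determined), so both sums are finite and we may work term by term.

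First I would show integrality. For the sum $\sum_{[L]}g_{XY}^{L}g_{ZL}^{M}$, parts (1) and (2) of Proposition \ref{numbers} already give $g_{XY}^{L}g_{ZL}^{M}\in\bbz[\frac{1}{q}]$ for every class $[L]$ except possibly the single class where $L\cong M\oplus Z[1]$ and simultaneously $L\cong X\oplus Y$; by the standing hypotheses on the triple $(X,Y,Z)$ this exceptional coincidence is exactly what is excluded (it would force one of the three forbidden isomorphisms), so in fact every term is integral and the whole first sum lies in $\bbz[\frac{1}{q}]$. Symmetrically, parts (3) and (4) handle $\sum_{[L']}g_{ZX}^{L'}g_{L'Y}^{M}$. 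Hence the difference of the two finite sums lies in $\bbz[\frac{1}{q}]$, which is the first claim. The main obstacle here is bookkeeping the degenerate classes: I must check carefully that the single ``resonant'' class $M\oplus Z[1]\cong X\oplus Y$ that Proposition \ref{numbers} cannot absorb genuinely cannot occur under the three excluded conditions on $(X,Y,Z)$, and likewise for the $L'$ side; this is where the proof really uses the standing assumption on the fixed triple rather than just the indecomposability of $X,Y,Z,M$.

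For the congruence in $\bbz[\frac{1}{q}]/(q-1)$, I would compare the first sum against $\sum_{[L]}g_{XY}^{L}\bar{g}_{ZL}^{M}$ term by term. By part (7) of Proposition \ref{numbers}, for each class $[L]$ we have $g_{XY}^{L}g_{ZL}^{M}-g_{XY}^{L}\bar{g}_{ZL}^{M}\in\bbz[\frac{1}{q}]$ and this difference vanishes in $\bbz[\frac{1}{q}]/(q-1)$; summing over the finitely many contributing classes gives
$$
\sum_{[L]}g_{XY}^{L}g_{ZL}^{M}\equiv\sum_{[L]}g_{XY}^{L}\bar{g}_{ZL}^{M}\pmod{q-1}.
$$
Likewise part (8) yields $\sum_{[L']}g_{ZX}^{L'}g_{L'Y}^{M}\equiv\sum_{[L']}\bar{g}_{ZX}^{L'}g_{L'Y}^{M}\pmod{q-1}$. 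Subtracting the two congruences gives exactly the asserted identity in $\bbz[\frac{1}{q}]/(q-1)$, completing the proof. The one subtlety to flag is that each individual term $g_{XY}^{L}g_{ZL}^{M}$ need not itself be integral (only the \emph{difference} with the barred version is), so the rearrangement must be carried out entirely inside $\bbz[\frac{1}{q}]/(q-1)$ along the finite index set; once finiteness of the index set is fixed from the Grothendieck-group constraint, the termwise application of (7) and (8) is routine.
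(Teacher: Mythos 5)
Your argument for the second statement (termwise application of Proposition \ref{numbers} (7) and (8) over the finite index set, then subtracting) is correct and is exactly what the paper does. The problem is the first statement: your claim that every term of each sum is individually integral is false, and the exceptional classes are \emph{not} excluded by the standing hypotheses on $(X,Y,Z)$. First, Proposition \ref{numbers} (1) assumes $L\neq 0$, so you have also overlooked the class $L=0$: when $X\cong Y[1]$ and $Z\cong M$ one has $g_{XY}^{0}g_{Z,0}^{M}=\frac{1}{|\mathrm{Aut}Y|}$, which does not lie in $\bbz[\frac{1}{q}]$ since $|\mathrm{Aut}Y|=|\mathrm{radEnd}Y|(q^{d(Y)}-1)$. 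Second, the resonant coincidence $L\cong M\oplus Z[1]\cong X\oplus Y$ is genuinely possible: by Krull--Schmidt it means ($M\cong X$ and $Y\cong Z[1]$) or ($M\cong Y$ and $X\cong Z[1]$), and a single identification such as $Y\cong Z[1]$ forces none of the three forbidden conditions $X\cong Z\cong Y[1]$, $X\cong Y\cong Z[1]$, $Y\cong Z\cong X[1]$, each of which requires \emph{two} simultaneous isomorphisms among $X,Y,Z$ (note also that $M$ does not appear in the exclusions at all). In these cases the corresponding term really fails to be integral: e.g.\ $g_{ZL}^{M}=\frac{1}{|\mathrm{Aut}Z|}$ for $L\cong M\oplus Z[1]$, and by the remark after Lemma \ref{mainlemma} the factor $g_{XY}^{X\oplus Y}$ contributes $|\mathrm{Hom}(X,Y)|$, so the product is of the shape $q^{n}/|\mathrm{Aut}Z|\notin\bbz[\frac{1}{q}]$.

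What the paper actually proves is not termwise integrality of each sum but a cancellation \emph{between} the two sums. After absorbing all unproblematic classes by (1)--(4), it isolates the leftover
$$g_{XY}^{X\oplus Y}g_{Z, X\oplus Y}^M+\delta_{X,Y[1]}g_{XY}^0g_{Z,0}^M-g_{ZX}^{Z\oplus X}g_{Z\oplus
X, Y}^M-\delta_{X,Z[1]}g_{ZX}^0g_{0,Y}^M,$$
reduces to the three symmetric cases $M\cong X,\,Y\cong Z[1]$; $M\cong Y,\,X\cong Z[1]$; $M\cong Z,\,X\cong Y[1]$, and invokes Proposition \ref{numbers} (5) and (6), whose whole content is that the \emph{difference} of the paired non-integral terms equals $\frac{|\mathrm{Hom}(X,Y)|-1}{|\mathrm{Aut}X|}\in\bbz[\frac{1}{q}]$ (using $d(X)\mid\mathrm{dim}_{k}\mathrm{Hom}(X,Y)$), even though each summand separately is not integral. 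Your proof never uses (5) or (6) and cannot be completed without this pairing. Since your derivation of the congruence in $\bbz[\frac{1}{q}]/(q-1)$ only becomes a meaningful identity once the first statement is in place, the gap propagates to the second statement as well.
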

\begin{proof}
 As for the first statement of the corollary,
by Proposition \ref{numbers} (1) and (2), it is sufficient to prove
that the number
$$g_{XY}^{X\oplus Y}g_{Z, X\oplus
Y}^M+\delta_{X,Y[1]}g_{XY}^0g_{Z,0}^M-g_{ZX}^{Z\oplus X}g_{Z\oplus
X, Y}^M-\delta_{X,Z[1]}g_{ZX}^0g_{0,Y}^M\in \bbz[\frac{1}{q}].$$ If
$X\oplus Y\ncong M\oplus Z[1]$ and $Z\oplus X\ncong M\oplus Y[1],$
by Proposition \ref{numbers} (1) and (3), this number belongs to
$\bbz[\frac{1}{q}]$. Hence, we only need to check the following
cases:
\begin{enumerate}
    \item $M\cong X, Y\cong Z[1], X\ncong Y, X\ncong Y[1]$,
    \item $M\cong Y, X\cong Z[1], Y\ncong X, Y\ncong X[1]$,
    \item $M\cong Z, X\cong Y[1], Z\ncong X, Z\ncong X[1]$.
\end{enumerate}
We note that the cases (1),(2) and (3) are symmetric to each other.
Proposition \ref{numbers} (5) and (6) correspond to the cases
(1),(2) and (3), respectively.  This proves the first statement of
the corollary. The second statement of the corollary can be deduced
by the first statement  and Proposition \ref{numbers} (7) (8).
\end{proof}
Here we recall some notations in \cite{XX2006}. Let $X,Y,Z, L, L'$
and $M$ be in $\mc_2$. Then define
$$
\mathrm{Hom}(M\oplus X,L)^{Y,Z[1]}_{L'[1]}:=\{\left(%
\begin{array}{c}
  m \\
  f \\
\end{array}%
\right)\in \mathrm{Hom}(M\oplus X,L)\mid$$$$\hspace{3cm}
\mathrm{Cone}(f)\simeq Y, \mathrm{Cone}(m)\simeq Z[1] \mbox{ and }\mathrm{Cone}\left(%
\begin{array}{c}
  m \\
  f \\
\end{array}%
\right)\simeq L'[1]\}
$$
and
$$
\mathrm{Hom}(L',M\oplus X)^{Y,Z[1]}_{L}:=\{(f',-m')\in
\mathrm{Hom}(L',M\oplus X)\mid
$$$$\hspace{3cm}\mathrm{Cone}(f')\simeq Y, \mathrm{Cone}(m')\simeq Z[1]
\mbox{ and } \mathrm{Cone}(f',-m')\simeq L\}.
$$
The orbit space of $\mathrm{Hom}(M\oplus X,L)^{Y,Z[1]}_{L'[1]}$
under the action of $\aut L$ is just the orbit space of
$\mathrm{Hom}(L',M\oplus X)^{Y,Z[1]}_{L}$  under the action of $\aut
L'$ (see \cite{XX2006}).  We denoted by $V(L',L;M\oplus X)_{Y,Z[1]}$
the orbit space. The following diagram illustrates the relation
among $V(X, Y; L),$$ V(Z, L; M),$$ V(Z, X; L')$, $V(L', Y; M)$ and
$V(L',L;M\oplus X)_{Y,Z[1]}$.
\begin{equation}
\xymatrix{Z\ar@{=}[r]\ar[d]^{l'} &Z\ar[d]^l\\
L'\ar@{.>}[r]^{f'}\ar[d]^{m'} &M\ar@{.>}[r]^{g'}\ar[d]^m &Y\ar@{.>}[r]^-{h'}\ar@{=}[d] &L'[1]\ar[d]^{m'[1]}\\
X\ar[r]^f\ar[d]^{n'} &L\ar[r]^g\ar[d]^n &Y\ar[r]^-{h} &X[1]\\
Z[1]\ar@{=}[r] &Z[1]}
\end{equation}

\begin{lemma}\label{commonsummand}
Let $\alpha\in V(L',L;M\oplus X)_{Y,Z[1]}$ has the representative of
the form as follows:
$$\xymatrix{L'_1\oplus L'_2\ar[rr]^-{\left(%
\begin{array}{cc}
  0  & 0 \\
  f' & -m' \\
\end{array}%
\right)}&&M\oplus X\ar[rr]^-{\left(%
\begin{array}{cc}
  0&m \\
  0&f \\
\end{array}%
\right)}&&L_1\oplus L_2\ar[rr]^-{\left(\begin{array}{cc}
  \theta_1&0\\
  0& \theta_2 \\
\end{array}\right)}&&L'_1[1]\oplus L'_2[1]}$$
such that $\theta_1$ is a nonzero isomorphism where $L=L_1\oplus
L_2$ and $L'=L'_1\oplus L'_2.$ Then we have
$$
M\cong X,\quad Z\cong Y[1],\quad L\cong X\oplus Y,\quad L'\cong
X\oplus Y[1].
$$
\end{lemma}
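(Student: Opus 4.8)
The plan is to extract the three cone conditions built into membership in $V(L',L;M\oplus X)_{Y,Z[1]}$ and to play them against the indecomposability of $X,Y,Z$ together with Krull--Schmidt (available because indecomposables have local endomorphism rings). Writing the given representative as the triangle $L'\xrightarrow{\phi}M\oplus X\xrightarrow{\psi}L\xrightarrow{\theta}L'[1]$, rotation shows $\mathrm{Cone}(\psi)\cong L'[1]$, while the decorations record $\mathrm{Cone}(f)\cong Y$ and $\mathrm{Cone}(m)\cong Z[1]$ for the two structure maps $f\colon X\to L$ and $m\colon M\to L$. The first observation is that $\theta_1$ being a nonzero isomorphism forces $L_1\neq 0$, $L'_1\neq 0$, and $L_1\cong L'_1[1]$.

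The heart of the argument is a splitting principle that I would isolate first: if a map $A\to B\oplus C$ has vanishing component into $B$, then its cone is $B\oplus\mathrm{Cone}(A\to C)$, which follows from additivity of triangles (a direct sum of triangles is a triangle) and uniqueness of cones. Reading the matrix $\psi$ in the paper's diagrammatic composition convention, both $f\colon X\to L$ and $m\colon M\to L$ have vanishing $L_1$-component, i.e.\ they factor through $L_2$. Hence $Y\cong\mathrm{Cone}(f)\cong L_1\oplus\mathrm{Cone}(X\to L_2)$ and $Z[1]\cong\mathrm{Cone}(m)\cong L_1\oplus\mathrm{Cone}(M\to L_2)$. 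Since $Y$ and $Z[1]$ are indecomposable and $L_1\neq 0$, each second summand must vanish, so $X\to L_2$ and $M\to L_2$ are isomorphisms and $Y\cong L_1\cong Z[1]$. This already yields $M\cong X$, $Z\cong Y[1]$, and $L=L_1\oplus L_2\cong X\oplus Y$.

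It remains to identify $L'$. From $L_1\cong L'_1[1]$ and $L_1\cong Y$ I get $L'_1\cong Y[1]$. For $L'_2$ I would use the third cone condition: since $\psi$ also has vanishing $L_1$-component, the splitting principle gives $\mathrm{Cone}(\psi)\cong L_1\oplus\mathrm{Cone}(\psi_2)$, where $\psi_2\colon M\oplus X\to L_2$ has components $m$ and $f$. Because $m\colon M\to L_2$ is an isomorphism, $\psi_2$ is a split epimorphism, so $M\oplus X\cong L_2\oplus K$ with $\mathrm{Cone}(\psi_2)\cong K[1]$; Krull--Schmidt together with $L_2\cong M$ forces $K\cong X$, whence $\mathrm{Cone}(\psi_2)\cong X[1]$. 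Comparing $\mathrm{Cone}(\psi)\cong L'[1]=L'_1[1]\oplus L'_2[1]$ with $L'_1[1]\oplus X[1]$ and cancelling gives $L'_2\cong X$, so $L'\cong X\oplus Y[1]$.

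The step I expect to require the most care is the bookkeeping of the composition convention, so that the correct blocks of $\phi$ and $\psi$ are seen to vanish: under the opposite convention the same computation would instead factor $f$ and $m$ through $L_1$ and force $M=0$, contradicting $M\neq 0$. One must therefore match the matrix entries against the directions of the morphisms $f,m,f',m'$ in the $3\times 3$ diagram displayed above the lemma to confirm that it is the $L_1$-component that vanishes and hence $L_2$ (not $L_1$) onto which the structure maps are isomorphisms. Once the convention is pinned down, the only nonformal inputs are the cone-splitting principle and a single application of Krull--Schmidt, both routine under the standing hypotheses on $\mc_2$.
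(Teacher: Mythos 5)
Your argument is correct and follows the same route as the paper: both proofs exploit the vanishing $L_1$-column of $\psi$ to split $L_1$ off the cones $Y\cong\mathrm{Cone}(f)$ and $Z[1]\cong\mathrm{Cone}(m)$ (the paper via a morphism of triangles onto $0\to L_1\xrightarrow{1}L_1\to 0$ producing a retraction, you via the direct-sum-of-triangles principle, which is the same splitting in different clothing) and then invoke indecomposability of $Y$ and $Z[1]$. The only cosmetic difference is at the end: the paper identifies $L'_1\cong Y[1]$ and $L'_2\cong X$ by running the identical splitting argument on the triangles induced by $f'$ and $m'$, whereas you use $\theta_1$ being an isomorphism together with $\mathrm{Cone}(\psi)\cong L'[1]$ and a Krull--Schmidt cancellation; both are valid under the standing hypotheses on $\mc_2$.
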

\begin{proof}
By definition, the triangle $\alpha$ induces the triangles
$$
\xymatrix{X\ar[rr]^-{\left(%
\begin{array}{cc}
  0 & f\\
\end{array}%
\right)}&& L_1\oplus L_2\ar[rr]^-{\left(%
\begin{array}{c}
  g_1 \\
  g_2 \\
\end{array}%
\right)}&& Y\ar[rr]&&X[1]}
$$
and
$$
\xymatrix{L'_1\oplus L'_2\ar[rr]^-{\left(%
\begin{array}{c}
  0 \\
  f'
\end{array}%
\right)}&& M\ar[rr]&& Y\ar[rr]&&L'_1[1]\oplus L'_2[1]}.
$$
Consider the following diagram
$$
\xymatrix{X\ar[d]\ar[rr]^-{\left(%
\begin{array}{cc}
  0 & f\\
\end{array}%
\right)}&& L_1\oplus L_2\ar[d]\ar[rr]^-{\left(%
\begin{array}{c}
  g_1 \\
  g_2 \\
\end{array}%
\right)}&& Y\ar@{.>}[d]^{v}\ar[rr]&&X[1]\\
0\ar[rr]&&L_1\ar[rr]^{1}&&L_1\ar[rr]&&0}.
$$
The left square is commutative. Then there exists a map $v$ such
that $g_2v=1$.  Since $Y$ is indecomposable, we deduce $Y\cong L_1$
and $X\cong L_2$. In the same way, we deduce $L'_1\cong
Y[1],L'_2\cong M.$ If we consider the other two induced triangles,
we obtain $L_1\cong Z[1],L_2\cong M$ and $L'_1\cong Z, L'_2\cong X.$
\end{proof}

We define the action of $\aut X$ on $V(L',L;M\oplus X)_{Y,Z[1]}$ by
$$a.((f',-m'),\left(%
\begin{array}{c}
  m \\
  f \\
\end{array}%
\right),\theta))^{\wedge}=((f',-m'a^{-1}),\left(%
\begin{array}{c}
  m \\
  af \\
\end{array}%
\right),\theta))^{\wedge}$$ for any $a\in \aut X$ and $((f',-m'),\left(%
\begin{array}{c}
  m \\
  f \\
\end{array}%
\right),\theta))^{\wedge}\in V(L',L';M\oplus X)_{Y,Z[1]}.$ The orbit
space is denoted by $\bar{V}(L',L';M\oplus X)_{Y,Z[1]}.$
\begin{lemma}\label{stablesubgroup}
For any $\alpha\in V(L',L;M\oplus X)_{Y,Z[1]},$ the stable subgroup
$G_{\alpha}$ of $\alpha$ under the action of $\aut X$ is isomorphic
to a vector space if $L\neq 0$ and $L'\neq 0.$
\end{lemma}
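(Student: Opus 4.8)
The plan is to imitate the stabilizer computation carried out in the proof of Lemma~\ref{mainlemma}, since the action of $\aut X$ on $V(L',L;M\oplus X)_{Y,Z[1]}$ is formally the same device as the action of $\aut Z$ there, now applied to the $X$-components of the maps $(f',-m')$ and $\binom{m}{f}$. First I would unwind the condition $a\in G_\alpha$. By definition $a\in\aut X$ lies in $G_\alpha$ exactly when the automorphism $\mathrm{diag}(1,a)$ of $M\oplus X$ extends to an isomorphism of the defining triangle $L'\to M\oplus X\to L\xrightarrow{\theta}L'[1]$, i.e.\ when there exist $p\in\aut L'$ and $r\in\aut L$ making the three squares commute. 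Reading off the components against the maps of the $3\times 3$ diagram preceding Lemma~\ref{commonsummand} gives, after fixing the inverse conventions consistently, the system
$$pf'=f',\qquad pm'=m'a,\qquad mr=m,\qquad af=fr,$$
together with the compatibility $\theta(p[1])=r\theta$.

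Next I would linearize by writing $a=1+a'$, $p=1+p'$, $r=1+r'$, which converts the system into the homogeneous linear relations
$$a'f=fr',\qquad mr'=0,\qquad p'm'=m'a',\qquad p'f'=0.$$
Thus $a\mapsto a'$ carries $G_\alpha$ onto the projection to the $a'$-coordinate of the solution set of a linear system in $(a',p',r')$. Using the column triangle $Z\xrightarrow{l}M\xrightarrow{m}L\xrightarrow{n}Z[1]$, the relation $mr'=0$ says $r'=ns$ factors through the cone $Z[1]$ of $m$; dually, using the row triangle $L'\xrightarrow{f'}M\xrightarrow{g'}Y\xrightarrow{h'}L'[1]$, the relation $p'f'=0$ forces $p'=uv$ to factor through $Y[-1]$ (the cone of $f'$ shifted). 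Both factorization conditions are linear, so the solution set is a $k$-subspace; it then remains only to verify that each admissible $a'$ is nilpotent, so that $1+a'\in\aut X$ is automatic and the correspondence is a bijection onto a genuine vector space.

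The nilpotency is the main obstacle, and it is the exact analogue of the two nilpotency steps of Lemma~\ref{mainlemma}. For the auxiliary $r'=ns$ I would pass to $sn\in\mathrm{End}(Z[1])$: since $(r')^{k+1}=n(sn)^{k}s$, $r'$ is nilpotent iff $sn$ is, and $Z[1]\cong Z$ is indecomposable, so $\mathrm{End}(Z[1])$ is local and $sn$ is nilpotent unless it is invertible. Symmetrically $p'=uv$ with $vu\in\mathrm{End}(Y[-1])$, and $Y[-1]$ is indecomposable because $Y$ is, so $p'$ is nilpotent unless $vu$ is invertible. Granting that $r'$ and $p'$ are nilpotent, iterating $a'f=fr'$ gives $(a')^k f=f(r')^k=0$ for large $k$, and iterating $p'm'=m'a'$ gives $m'(a')^k=(p')^k m'=0$; since $X$ is indecomposable with local endomorphism ring, $a'$ is nilpotent provided at least one of $f,m'$ is nonzero, and $L\neq 0$, $L'\neq 0$ forces this by the cone-splitting argument used for $g_1,g_2\neq 0$ in Lemma~\ref{mainlemma}.

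The delicate remaining point is the invertible case of $sn$ (resp.\ $vu$): there $n$ is a split epimorphism, the column triangle splits off $Z[1]$ as a summand of $L$, and one lands precisely in the configuration isolated in Lemma~\ref{commonsummand}, forcing $M\cong X$, $Z\cong Y[1]$, $L\cong X\oplus Y$, $L'\cong X\oplus Y[1]$. I would dispose of this degenerate case by computing $G_\alpha$ directly from the explicit representative furnished by Lemma~\ref{commonsummand} and checking by hand that it is again a vector space. Combining the generic and degenerate cases, $\{a-1:a\in G_\alpha\}$ is a $k$-subspace of $\rad\mathrm{End}X$ whenever $L\neq 0$ and $L'\neq 0$, so $G_\alpha$ is isomorphic to a vector space and $\abs{G_\alpha}$ is a power of $q$, as required.
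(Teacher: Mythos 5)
Your generic case is essentially the paper's own argument: the paper also reduces $a\in G_{\alpha}$ to the existence of $b\in\aut L$ with $mb=m$ and $af=fb$, linearizes to $b=1-b'$ with $b'\in n\mathrm{Hom}(Z[1],L)$, and uses locality of $\mathrm{End}X$ together with $f\neq 0$ (which, as you say, follows from $L\neq 0$ and the indecomposability of $Y=\mathrm{Cone}(f)$) to get nilpotency of $a'$ and closure under addition. (The paper works only with the $\aut L$-action; your extra conditions on $p\in\aut L'$ are redundant rather than harmful, since the two orbit-space descriptions coincide.)

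The gap is in your degenerate case, and it is a real one. When $sn$ is invertible, all you get is that the column triangle $Z\to M\xrightarrow{m}L\xrightarrow{n}Z[1]$ splits, i.e.\ $L\cong M\oplus Z[1]$ --- nothing more. This is \emph{not} the configuration of Lemma~\ref{commonsummand}: that lemma's hypothesis is that the connecting morphism $\theta\colon L\to L'[1]$ has a nonzero invertible block $\theta_1$, a condition on $\theta$ which is independent of whether $Z[1]$ splits off $L$ (one can have $L\cong M\oplus Z[1]$ with $\theta$ entirely in the radical). So the conclusions $M\cong X$, $Z\cong Y[1]$, $L\cong X\oplus Y$, $L'\cong X\oplus Y[1]$ do not follow, the explicit representative of Lemma~\ref{commonsummand} is not available, and your plan to ``compute $G_{\alpha}$ directly from that representative'' has nothing to compute from. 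What is actually required here is the paper's separate case $L\cong M\oplus Z[1]$: writing the representative as $\left(\begin{smallmatrix}1&0\\ f_1&f_2\end{smallmatrix}\right)$, one shows that an \emph{invertible} $a'=a-1$ would reduce $f$ to a map of the form $(0,\,a'f_2)$, and indecomposability of $Y$ then forces $X\cong Z[1]$, $Y\cong M$ and hence $L'=0$, which the hypothesis excludes; only then is $a'$ nilpotent and the vector-space structure recovered. That step --- ruling out invertible $a'$ in the split case by deriving $L'=0$ --- is exactly what your sketch is missing, and it is needed precisely where your factorization-through-the-cone argument breaks down.
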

\begin{proof}
By definition, $$G_{\alpha}=\{a\in \aut X\mid af=fb \mbox{ for
some }b\in \aut L \mbox{ such that }mb=m\}.$$

\nd (1) If $f=0,$ then $Y\cong L\oplus X[1].$ However, $Y$ is
indecomposable, so $L=0.$ Hence, $Y\cong X[1]$ and $Z\cong M.$ This
imply $L'\cong Z\oplus X.$ In the following, we assume $f\neq 0.$

\nd (2) If $L\ncong M\oplus Z[1],$ then $mb=b$ implies $b=1-b'$
with $b'\in n\hom(Z[1],L)$ nilpotent. In this case, $G_{\alpha}$
is isomorphic to
$$\hspace{1.3cm}G'_{\alpha}=\{a'\in \mathrm{End}X\mid 1-a'\in \aut X, a'f=fb' \mbox{ for
some }b'\in n\hom(Z[1],L)\}.$$ Since $b'$ is nilpotent, $a'$ is
nilpotent. And since $X$ is indecomposable, $1-a'\in \aut X.$ We
claim that $G'_{\alpha}$ is a vector space. Indeed, $0\in
G'_{\alpha}.$ For any $a'_1,a'_2\in G'_{\alpha},$ we have
$a'_if=fb'_i$ for $i=1,2$ and some $b'_1,b'_2\in n\hom(Z[1],L).$ So
$(a'_1+a'_2)f=f(b'_1+b'_2).$ It is clear that $b'_1+b'_2\in
n\hom(Z[1],L)$ and $1-(a'_1+a'_2)\in \aut X$ since $a'_1+a'_2$ is
nilpotent. This shows $a'_1+a'_2\in G'_{\alpha}.$

\nd (3) If $L\cong M\oplus Z[1],$ $\alpha$ has the
    representative of the following form:
$$\xymatrix{L'\ar[rr]^-{\left(%
\begin{array}{cc}
  f' & -m' \\
\end{array}%
\right)}&&M\oplus X\ar[rr]^-{\left(%
\begin{array}{cc}
  1&0 \\
  f_1&f_2 \\
\end{array}%
\right)}&&M\oplus Z[1]\ar[rr]^-{\theta}&&L'[1]}.$$ The stable
subgroup is
$$G_{\alpha}=\{a\in \aut X\mid af=fb \mbox{ for
some }b=\left(%
\begin{array}{cc}
  1 & 0 \\
  b_{21} & b_{22} \\
\end{array}%
\right)\in \aut L \}$$ where $f=(f_1,f_2).$ Equivalently, we have
$$
af_1=f_1+f_2b_{21} \mbox{ and }af_2=f_2b_{22}.
$$
Set $a'=a-1$ and $b'_{22}=b_{22}-1.$ If $a'\in \aut X,$ then
$$
a'\left(%
\begin{array}{cc}
  f_1 & f_2 \\
\end{array}%
\right)\left(%
\begin{array}{cc}
  1 & 0 \\
  -(b'_{22})^{-1}b_{21} & 1 \\
\end{array}%
\right)=\left(%
\begin{array}{cc}
  0 & a'f_2 \\
\end{array}%
\right).
$$
Since $Y$ is indecomposable, this shows $X\cong Z[1]$ and $Y\cong
M.$ In this case, $L'=0.$
\end{proof}
For any $X\in \mc_2$, we denote by $u_X$  by the isomorphism class
of $X$. Define the multiplication by $$u_X u_Y
:=\sum_{[L]}g_{YX}^{L}u_{L}
$$
and we set $u_X u_Y(L)=g_{XY}^L.$
\begin{theorem}\label{associativity}
Let $X,Y,Z$ and $M\neq 0$ be indecomposable objects in $\mc_2$. Then
in $\bbz[\frac{1}{q}]/(q-1)$ we have
\begin{enumerate}
  \item If $X\cong Y[1],
X\ncong Z$, $X\ncong Z[1]$ and $M\cong Z,$ then
$$[(u_Yu_X)u_Z-u_Y(u_Xu_Z)](M)=-\frac{\mathrm{dim}_{k}\mathrm{Hom}(Z,
X)}{d(X)}.$$
  \item If $X\cong Z[1], X\ncong Y, X\ncong Y[1]$ and $M\cong Y$, then
$$
[(u_Yu_X)u_Z-u_Y(u_Xu_Z)](M)=\frac{\mathrm{dim}_{k}\mathrm{Hom}(X,
Y)}{d(X)}. $$
  \item Otherwise, $$
[(u_Yu_X)u_Z-u_Y(u_Xu_Z)](M)=0.
$$
\end{enumerate}

\end{theorem}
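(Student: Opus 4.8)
The plan is to reduce the associator to a difference of two double Hall-number sums, identify this with the expression controlled by Corollary \ref{transitive}, and then evaluate it modulo $q-1$ by separating a \emph{generic} part (which cancels) from a small \emph{boundary} part (which produces the stated defects). First I would unwind the multiplication: writing $u_Au_B:=\sum_{[L]}g_{BA}^Lu_L$ and extracting the coefficient at $u_M$ gives $[(u_Yu_X)u_Z](M)=\sum_{[L]}g_{XY}^Lg_{ZL}^M$ and $[u_Y(u_Xu_Z)](M)=\sum_{[L']}g_{ZX}^{L'}g_{L'Y}^M$, so the associator evaluated at $M$ is exactly the difference studied in Corollary \ref{transitive}. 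By the first statement of that corollary this difference lies in $\bbz[\frac1q]$, hence is well defined in $\bbz[\frac1q]/(q-1)$; by the second statement it is congruent there to the barred difference $\sum_{[L]}g_{XY}^L\bar g_{ZL}^M-\sum_{[L']}\bar g_{ZX}^{L'}g_{L'Y}^M$. Thus it suffices to evaluate this last expression in $\bbz[\frac1q]/(q-1)$ in each of the three cases.

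Next I would show that the generic contributions cancel. Both sums are to be read through the octahedral configuration space $V(L',L;M\oplus X)_{Y,Z[1]}$ displayed in the $3\times 3$ diagram preceding Lemma \ref{commonsummand}: Proposition \ref{mainproposition} rewrites each Hall number as a sum of terms $1/|G|$ over orbits, and the octahedral axiom identifies the orbit data feeding the $[L]$-sum with that feeding the $[L']$-sum. On the locus where $L\neq 0$, $L'\neq 0$ and no common summand splits off (i.e. outside the situation of Lemma \ref{commonsummand}), Lemma \ref{stablesubgroup} shows the relevant $\aut X$-stabilizers are vector spaces, so the orbit weights are powers of $q$ and hence $\equiv 1\pmod{q-1}$. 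Consequently the two weighted counts agree modulo $q-1$ over this common generic locus, and their contributions to the barred difference cancel.

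It then remains to collect and evaluate the boundary contributions, namely the configurations excluded from the generic cancellation. These are the terms with $L=0$ (forcing $X\cong Y[1]$ and $M\cong Z$), with $L'=0$ (forcing $X\cong Z[1]$ and $M\cong Y$), and the split/common-summand terms attached to them, which are precisely the non-generic combination isolated in the proof of Corollary \ref{transitive}. Under the standing hypotheses on $(X,Y,Z)$ and the case assumptions, at most one family of boundary terms is nonempty. In case (3) none survives, so the associator is $0$. In case (1) the surviving terms regroup, via the special definitions (3),(4) and the remark following Lemma \ref{mainlemma}, into the combination $g_{ZX}^{Z\oplus X}g_{Z\oplus X,X[1]}^Z-g_{X,X[1]}^0g_{Z,0}^Z$ of Proposition \ref{numbers}(6), which equals $\frac{|\mathrm{Hom}(Z,X)|-1}{|\aut X|}\equiv\frac{\mathrm{dim}_{k}\mathrm{Hom}(Z,X)}{d(X)}\pmod{q-1}$ using $|\aut X|=|\mathrm{radEnd}X|(q^{d(X)}-1)$; since these terms enter the associator with the opposite sign, the value is $-\frac{\mathrm{dim}_{k}\mathrm{Hom}(Z,X)}{d(X)}$. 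Case (2) is symmetric and uses Proposition \ref{numbers}(5), giving $+\frac{\mathrm{dim}_{k}\mathrm{Hom}(X,Y)}{d(X)}$.

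The main obstacle will be the cancellation step. The delicate points are to make the octahedral-axiom identification of the two configuration spaces genuinely weight-preserving by tracking the $\aut X$-, $\aut L$- and $\aut L'$-actions consistently through the $3\times 3$ diagram, and to confirm that the vector-space stabilizers of Lemma \ref{stablesubgroup} reduce every generic orbit weight to $1$ modulo $q-1$, so that the required term-by-term congruence really holds. A secondary subtlety is the sign and the precise Hom-space appearing in the boundary evaluation, which hinges on applying the special definitions (3),(4) and Proposition \ref{numbers}(5),(6) to exactly the right pair of objects.
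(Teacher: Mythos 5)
Your proposal follows essentially the same route as the paper's proof: reduce the associator at $M$ to the difference of double Hall-number sums, pass to the barred version via Corollary \ref{transitive}, identify both sides on the octahedral configuration space $V(L',L;M\oplus X)_{Y,Z[1]}$, cancel the generic orbits using the vector-space stabilizers of Lemma \ref{stablesubgroup}, and evaluate the boundary terms with $|\aut X|=|\mathrm{radEnd}X|(q^{d(X)}-1)$. The one point to tighten is your claim that in case (3) no boundary term survives: when $M\cong X$ and $Z\cong Y[1]$ the common-summand configurations of Lemma \ref{commonsummand} (i.e.\ $L\cong X\oplus Y$, $L'\cong X\oplus Y[1]$) do contribute, and the paper ends its proof with an explicit orbit count showing that these two contributions cancel against each other.
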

\begin{proof}
It is equivalent to computer the number $ \sum_{[L], L\in
\mc_2}g_{XY}^{L}g_{ZL}^{M}-\sum_{[L'],L'\in\mc_2}g_{ZX}^{L'}g_{L'Y}^{M}.
$
By Corollary \ref{transitive}, we have
\begin{eqnarray}
   && \sum_{[L], L\in
\mc_2}g_{XY}^{L}g_{ZL}^{M}-\sum_{[L'],L'\in\mc_2}g_{ZX}^{L'}g_{L'Y}^{M} \nonumber\\
   &=& \sum_{[L], L\in
\mc_2}g_{XY}^{L}\bar{g}_{ZL}^{M}-\sum_{[L'], L'\in\mc_2}\bar{g}_{ZX}^{L'}g_{L'Y}^{M} \nonumber\\
  &=& \sum_{[L],L\ncong M\oplus Z[1]\in \mc_2}\frac{|(X,L)_Y|}{|\mathrm{Aut}X|}\cdot
\frac{|(M,L)_{Z[1]}|}{|\mathrm{Aut}L|}\nonumber \\
&& -\sum_{[L'],L'\ncong M\oplus
Y[1]\in\mc_2}\frac{|(L',X)_{Z[1]}|}{|\mathrm{Aut}X|}\cdot
\frac{|(L',M)_Y|}{|\mathrm{Aut}L'|}\nonumber \\
    && +|\mathrm{Hom}(Z[1], M)|\cdot\frac{|(X, M\oplus Z[1])_Y|}{|\mathrm{Aut}X|}\cdot
\frac{|(M,M\oplus Z[1])_{Z[1]}|}{|\mathrm{Aut}(M\oplus
Z[1])|}\nonumber \\
&& -|\mathrm{Hom}(M,Y[1])|\cdot\frac{|(M\oplus
Y[1],X)_{Z[1]}|}{|\mathrm{Aut}X|}\cdot \frac{|(M\oplus
Y[1],M)_Y|}{|\mathrm{Aut}(M\oplus Y[1])|}\nonumber
\\
  &=& \sum_{[L],[L'];L,L'\in\mc_2}\frac{1}{|\mathrm{Aut}X|}\cdot \delta_{L, M\oplus Z[1]}\cdot\frac{|\mathrm{Hom}(M\oplus
X,L)^{Y,Z[1]}_{L'[1]}|}{|\mathrm{Aut}L|}\nonumber
\\
&&-\sum_{[L],[L'];L,L'\in\mc_2}\frac{1}{|\mathrm{Aut}X|}\cdot
\delta'_{L', M\oplus Y[1]}\cdot\frac{|\mathrm{Hom}(L',M\oplus
X)^{Y,Z[1]}_{L}|}{|\mathrm{Aut}L'|} \nonumber
\end{eqnarray}
where $\delta_{L, M\oplus Z[1]}=|\mathrm{Hom}(Z[1], M)|$ for $L\cong
M\oplus Z[1]$ and $1$ otherwise, $\delta_{L', M\oplus
Y[1]}=|\mathrm{Hom}(M, Y[1])|$ for $L'\cong M\oplus Y[1]$ and $1$
otherwise. By Proposition \ref{mainproposition} and Lemma
\ref{commonsummand}, unless $L\cong X\oplus Y$ and $L'\cong X\oplus
Y[1],$  the above sum is equal to
$$
\sum_{[L],[L'];L,L'\in\mc_2}(\sum_{\alpha\in V(L',L;M\oplus X)_{Y,
Z[1]}}(\frac{1}{q^{s(\alpha)}}-\frac{1}{q^{t(\alpha)}}))\cdot
\frac{1}{|\mathrm{Aut}X|}
$$
where $s(\alpha)=\mathrm{dim}_{k}
\theta\mathrm{Hom}(L'[1],L)-\delta_{L, M\oplus
Z[1]}\mathrm{dim}_{\bbc}\mathrm{Hom}(Z[1], M)$ and
$$t(\alpha)=\mathrm{dim}_{k} \mathrm{Hom}(L'[1],L)\theta- \delta_{L',
M\oplus Y[1]}\mathrm{dim}_{\bbc}\mathrm{Hom}(M, Y[1]).$$Here,
$\theta$ is a morphism from $L$ to $L'[1]$ in a triangle belonging
to $\alpha$. Consider the action of $\mathrm{Aut}X$ on
$V(L',L;M\oplus X)_{Y, Z[1]}$, we always have
$s(\alpha)=s(a.\alpha)$ and $t(\alpha)=t(a.\alpha)$ for any $a\in
\mathrm{Aut} X$ and $\alpha\in V(L',L; M\oplus X)_{Y, Z[1]}.$ Hence,
the sum is again equal to
\begin{eqnarray}
   && \sum_{[L],[L'];L,L'\in\mc_2}(\sum_{\bar{\alpha}\in
\bar{V}(L',L;M\oplus X)_{Y,
Z[1]}}(\frac{1}{q^{s(\alpha)}}-\frac{1}{q^{t(\alpha)}})\cdot
\frac{|\mathrm{Aut}X|}{|G_{\alpha}|})\cdot
\frac{1}{|\mathrm{Aut}X|} \nonumber\\
   &=&\sum_{[L],[L'];L,L'\in\mc_2}(\sum_{\bar{\alpha}\in
\bar{V}(L',L;M\oplus X)_{Y,
Z[1]}}(\frac{1}{q^{s(\alpha)}}-\frac{1}{q^{t(\alpha)}})\cdot
\frac{1}{|G_{\alpha}|})\nonumber
\end{eqnarray}
where $G_{\alpha}$ is the stable subgroup for $\alpha\in
V(L',L;M\oplus X)_{Y, Z[1]}.$ If $L=0, L'\cong X\oplus Z$ or $L\cong
X\oplus Y,L'=0,$ then it is easy to know
$$\frac{|\mathrm{Hom}(M\oplus
X,L)^{Y,Z[1]}_{L'[1]}|}{|\mathrm{Aut}L|}=\frac{|\mathrm{Hom}(L',M\oplus
X)^{Y,Z[1]}_{L}|}{|\mathrm{Aut}L'|}=1.$$ Hence, if $X\cong Y[1],
X\ncong Z$, $X\ncong Z[1]$ and $M\cong Z$ (i.e., $L=0$ and $L'\cong
X\oplus Z$), then $$
[(u_Yu_X)u_Z-u_Y(u_Xu_Z)](M)=\frac{1}{|\mathrm{Aut}X|}-\frac{1}{|\mathrm{Aut}X|}\cdot
|\mathrm{Hom}(Z, X)|.
$$ By \cite[Lemma 8.1]{PX2000}, we have
$$
\frac{1}{|\mathrm{Aut}X|}-\frac{1}{|\mathrm{Aut}X|}\cdot
|\mathrm{Hom}(Z, X)|=-\frac{\mathrm{dim}_{k}\mathrm{Hom}(Z,
X)}{d(X)}.
$$
If $X\cong Z[1], X\ncong Y, X\ncong Y[1]$ and $M\cong Y$ (i.e.,
$L\cong X\oplus Y$ and $L'=0$), then
$$
[(u_Yu_X)u_Z-u_Y(u_Xu_Z)](M)=\frac{1}{|\mathrm{Aut}X|}\cdot
|\mathrm{Hom}(X, Y)|-\frac{1}{|\mathrm{Aut}X|}. $$ Otherwise,
$G_{\alpha}$ is isomorphic to a vector space by Lemma
\ref{stablesubgroup}. Therefore, the sum vanishes in
$\bbz[\frac{1}{q}]/(q-1).$ Now we assume that $L\cong X\oplus Y$ and
$L'\cong X\oplus Y[1].$ We note that Lemma \ref{commonsummand} shows
that $M\cong X$ and $Z\cong Y[1]$ in this case. By Lemma
\ref{mainproposition}, we have
$$
\frac{|\mathrm{Hom}(M\oplus
X,L)^{Y,Z[1]}_{L'[1]}|}{|\mathrm{Aut}L|}=\frac{|V(L',L; M\oplus
X)^{Y,Z[1]}|}{|\mathrm{Aut}X|\cdot|\mathrm{Hom}(Y,X)|}
$$
and
$$
\frac{|\mathrm{Hom}(L', M\oplus
X)^{Y,Z[1]}_{L}|}{|\mathrm{Aut}L'|}=\frac{|V(L',L; M\oplus
X)^{Y,Z[1]}|}{|\mathrm{Aut}X|\cdot|\mathrm{Hom}(X[1],Y)|}
$$
for $L\cong X\oplus Y$, $L'\cong X\oplus Y[1], M\cong X$ and $Z\cong
Y[1].$ In this case, we deduce that
$$
|\mathrm{Hom}(Z[1], M)|\cdot\frac{|V(L',L; M\oplus
X)_{Y,Z[1]}|}{|\mathrm{Aut}X|\cdot|\mathrm{Hom}(Y,X)|}-|\mathrm{Hom}(M,
Y[1])|\cdot\frac{|V(L',L; M\oplus
X)_{Y,Z[1]}|}{|\mathrm{Aut}X|\cdot|\mathrm{Hom}(X[1],Y)|}
$$
vanishes. This completes the proof of the theorem.
\end{proof}
We note that the theorem is a refinement of Proposition 7.5 in
\cite{PX2000} which is a crucial point to prove the Jacobi identity
in \cite{PX2000}.

For any $X\in \mathrm{ind}\mc_2$, we introduce a new variant
$\theta_{X}$ associated to $X$. Define a new multiplication between
indecomposable objects by the following rules. For $X, Y\in \mc_2$,

\begin{enumerate}
  \item  if $X\ncong Y[1],$ $u_X \cdot u_Y
  :=\sum_{[L]}g_{YX}^{L}u_{L},$
  \item $u_X\cdot u_{X[1]}:=\sum_{[L]}g_{X[1]X}^L+\theta_{X[1]},$
  \item $u_Y\cdot\theta_{X}:=-\frac{\mathrm{dim}_k\mathrm{Hom}(X[1],
Y)}{d(X)},$
  \item $\theta_{X}\cdot
u_Y:=-\frac{\mathrm{dim}_k\mathrm{Hom}(Y, X)}{d(X)}$.
\end{enumerate}

Then we have the following direct corollary of Theorem
\ref{associativity}.
\begin{Cor}\label{almostassociativity}
With the above notation,  in $\bbz[\frac{1}{q}]/(q-1)$, we have
$$
[(u_Y\cdot u_X)\cdot u_Z-u_Y\cdot (u_X\cdot u_Z)](M)=0.
$$
for $M\neq 0.$
\end{Cor}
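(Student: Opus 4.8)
The plan is to reduce Corollary \ref{almostassociativity} to Theorem \ref{associativity} by bookkeeping the new generators $\theta_X$. The new multiplication $\cdot$ agrees with the old one $u_Xu_Y$ except in the single case $X\cong Y[1]$, where it adds the term $\theta_{X[1]}$, and on products involving $\theta$'s, which are declared to be scalars. So I would first expand $[(u_Y\cdot u_X)\cdot u_Z-u_Y\cdot(u_X\cdot u_Z)](M)$ and isolate exactly the terms that differ from the old associator $[(u_Yu_X)u_Z-u_Y(u_Xu_Z)](M)$ computed in Theorem \ref{associativity}. The strategy is to show that the discrepancy contributed by the $\theta$-terms precisely cancels the nonzero right-hand sides $(1)$ and $(2)$ of Theorem \ref{associativity}, leaving $0$ in every case.

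Concretely, I would organize the proof by the same case analysis as Theorem \ref{associativity}. In the generic situation where none of the coincidences $X\cong Y[1]$ or $X\cong Z[1]$ occurs, the products $u_Y\cdot u_X$, $u_X\cdot u_Z$, etc., coincide with the old products (since rule $(1)$ applies), and no $\theta$ ever appears; hence the associator equals the old one, which is $0$ by Theorem \ref{associativity}(3). The work is concentrated in the two exceptional cases. When $X\cong Y[1]$ (so rule $(2)$ fires in $u_Y\cdot u_X=u_{X[1]}\cdot u_X$, producing an extra $\theta_{X[1]}$), I would compute $(u_Y\cdot u_X)\cdot u_Z$ and pick up the additional term $\theta_{X[1]}\cdot u_Z$; by rule $(4)$ this equals $-\frac{\mathrm{dim}_k\mathrm{Hom}(Z,X)}{d(X)}$, which is exactly the value in Theorem \ref{associativity}(1). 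Thus the old nonzero associator $-\frac{\mathrm{dim}_k\mathrm{Hom}(Z,X)}{d(X)}$ is cancelled by $\theta_{X[1]}\cdot u_Z$, and the new associator vanishes. The case $X\cong Z[1]$ is symmetric: here the $\theta_{X[1]}$ arises in $u_X\cdot u_Z=u_X\cdot u_{X[1]}$, so the extra term lives in $u_Y\cdot(u_X\cdot u_Z)$ as $u_Y\cdot\theta_{X[1]}$, which by rule $(3)$ equals $-\frac{\mathrm{dim}_k\mathrm{Hom}(X,Y)}{d(X)}$; comparing with Theorem \ref{associativity}(2), whose value is $+\frac{\mathrm{dim}_k\mathrm{Hom}(X,Y)}{d(X)}$, and accounting for the overall sign of this term in the associator, one again gets exact cancellation.

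I expect the main obstacle to be the sign and placement bookkeeping: I must verify that each extra $\theta$-term enters the associator on the correct side (left versus right factor of the bracket) and with the correct overall sign coming from the $+$ and $-$ in $(u_Y\cdot u_X)\cdot u_Z-u_Y\cdot(u_X\cdot u_Z)$, so that it lines up against the right case of Theorem \ref{associativity}. A secondary point requiring care is confirming that the exceptional coincidences cannot overlap in a way that introduces two $\theta$-terms simultaneously; the hypotheses $X\ncong Z$, $X\ncong Z[1]$ (and the $T^2\cong1$ structure) should rule out the problematic degenerate configurations, matching the excluded triples $(1)$--$(3)$ fixed at the start of the section. Once the placement and signs are pinned down, the cancellation is immediate and the corollary follows directly; no further computation beyond invoking Theorem \ref{associativity} and the defining rules $(1)$--$(4)$ for $\cdot$ is needed.
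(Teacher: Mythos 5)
Your overall strategy is the intended one (the paper gives no written proof and simply calls this a ``direct corollary'', so the case-by-case cancellation against Theorem \ref{associativity} is exactly what is meant), but the step you yourself single out as ``the main obstacle'' --- the sign and placement bookkeeping --- is precisely where the argument as you have set it up does not close, and you never actually carry it out. Take the case $X\cong Y[1]$, $M\cong Z$. Rule (2) applied to $u_Y\cdot u_X=u_Y\cdot u_{Y[1]}$ produces the extra generator $\theta_{Y[1]}=\theta_X$ (not $\theta_{X[1]}$ as you write; with your substitution $u_{X[1]}\cdot u_X=u_{X[1]}\cdot u_{(X[1])[1]}$ the rule yields $\theta_{X[2]}=\theta_X$). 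This term enters the associator with a \emph{plus} sign, inside $(u_Y\cdot u_X)\cdot u_Z$, and rule (4) gives $\theta_X\cdot u_Z=-\frac{\mathrm{dim}_k\mathrm{Hom}(Z,X)}{d(X)}$, i.e.\ the \emph{same} sign as the defect $-\frac{\mathrm{dim}_k\mathrm{Hom}(Z,X)}{d(X)}$ of Theorem \ref{associativity}(1). Two equal quantities added with the same sign do not cancel: with the rules exactly as printed you obtain $-2\,\frac{\mathrm{dim}_k\mathrm{Hom}(Z,X)}{d(X)}$, not $0$. The case $X\cong Z[1]$, $M\cong Y$ fails the same way: $u_Y\cdot\theta_{X[1]}=-\frac{\mathrm{dim}_k\mathrm{Hom}(X,Y)}{d(X)}$ sits inside the subtracted term $u_Y\cdot(u_X\cdot u_Z)$, hence contributes $+\frac{\mathrm{dim}_k\mathrm{Hom}(X,Y)}{d(X)}$ to the associator, which adds to, rather than cancels, the value in Theorem \ref{associativity}(2). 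A correct proof must either flip a sign in the defining rules (e.g.\ read rule (2) with $-\theta$, or rules (3)--(4) with $+$) and say so explicitly, or exhibit a bookkeeping under which the signs genuinely oppose; asserting ``exact cancellation'' after noting that the two quantities are \emph{equal} is the gap.

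Two smaller points you should also address. First, rules (3) and (4) produce scalars, so you must state how such a scalar contributes to the coefficient of $u_M$ for $M\neq 0$; the only reading compatible with the claim (and with the Cartan-type action $[\tilde{h}^*_X,u_Y]^*$ used later) is $\theta_X\cdot u_Z=-\frac{\mathrm{dim}_k\mathrm{Hom}(Z,X)}{d(X)}\,u_Z$, which is what makes the term land on $u_M$ with $M\cong Z$. Second, your claim that in the generic case ``no $\theta$ ever appears'' is not accurate: the inner products $u_L\cdot u_Z$ with $L\cong Z[1]$ and $u_Y\cdot u_{L'}$ with $L'\cong Y[1]$ do produce $\theta$-terms inside the iterated products; they are harmless only because the $\theta$'s are independent generators and hence do not contribute to the coefficient of $u_M$ for $M\neq 0$, and this should be said.
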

This shows that the new multiplication is `almost' associative for
indecomposable objects. Here, `almost' means that $(u_Y\cdot
u_X)\cdot u_Z$ or $u_Y\cdot (u_X\cdot u_Z)$ may be nonsense in
$\bbz[\frac{1}{q}]/(q-1),$ $(u_Y\cdot u_X)\cdot u_Z-u_Y\cdot
(u_X\cdot u_Z)$ makes sense in $\bbz[\frac{1}{q}]/(q-1).$

\section{The theorem of Peng and Xiao}
Let $\mc_2$ be a 2-periodic triangulated category as in the last
section.  We denote by $G(\mc_2)$ the Grothendieck group of $\mc_2$.
For any $M\in\mc_2,$ we denote by $h_M$ the canonical image of $M$
in $G(\mc_2),$ called the dimension vector of $M.$ Denote by
$\bold{h}$ the subgroup of $\mathbb{Q}\otimes_{\mathbb{Z}}G(\mc_2)$
generated by $\tilde{h}_M:=\frac{h_M}{d(M)}$ for $M\in
\mbox{ind}\mc_2$. Define a symmetric bilinear function $(-\mid-)$ on
$\bold{h}\times \bold{h}$ as follows
$$
(h_X\mid
h_Y)=\mbox{dim}_{k}\mbox{Hom}(X,Y)-\mbox{dim}_{k}\mbox{Hom}(X,Y[1])+\mbox{dim}_{k}\mbox{Hom}(Y,X)-\mbox{dim}_{k}\mbox{Hom}(Y,X[1])
$$
for any $X,Y\in\mc_2.$ We note that $(\tilde{h}_X\mid h_Y)\in
\bbz[\frac{1}{q}]$ for indecomposable objects $X,Y\in \mc_2.$

Let $\mathfrak{n}$ be a free abelian group with a basis $\{[X]\mid
X\in\mbox{ind}\mc_2\}.$ Let $\bold{g}=\bold{h}\oplus \mathfrak{n}$
be a direct sum of $\mathbb{Z}$-modules. Consider the factor group
$\bold{g}_{(q-1)}=\bold{g}/(q-1)\bold{g}.$ We denote by $u_M,h_M$
the corresponding residues. As we know, the associativity of the
multiplication naturally deduces the Jacobi identity. Here, the
situation is very similar. The `almost' associativity of the
multiplication defined by $g^L_{XY}$ for indecomposable objects also
deduces the Jacobi identity. For any indecomposable objects $X,Y$ in
$\mc_2,$ we define the Lie bracket over $\bbz[\frac{1}{q}]/(q-1)$ by
$$
[u_X, u_Y] :=[u_X, u_Y]_{\mathfrak{n}}+\delta_{X,Y[1]}\tilde{h}_X.
$$
where $[u_X,
u_Y]_{\mathfrak{n}}=\sum_{[L]}(g_{YX}^{L}-g_{XY}^{L})u_{L}.$ And,
$$
[\tilde{h}_{X},u_{Y}]=-[u_Y,\tilde{h}_X]=-(\tilde{h}_X\mid h_Y)u_Y
\quad \mbox{ and } \quad [\bold{h},\bold{h}]=0.
$$
 We introduce the notation $$[u_X,
u_Y]_{\mathfrak{n}}(L):=g_{YX}^{L}-g_{XY}^{L}.$$

\begin{lemma}\cite[Lemma 7.4]{PX2000}\label{indecomposable}
With the above notation, we have
$$[u_X,
u_Y]_{\mathfrak{n}}(L)=0$$ where $L$ is a decomposable object in
$\mc_2.$
\end{lemma}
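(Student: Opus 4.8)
The plan is to unfold the definition $[u_X,u_Y]_{\mathfrak{n}}(L)=g_{YX}^{L}-g_{XY}^{L}$ and show the two structure constants agree in $\bbz[\frac{1}{q}]/(q-1)$ whenever $L$ is decomposable and $X,Y$ are indecomposable. Recall that $g_{XY}^{L}=\frac{|(X,L)_{Y}|}{|\aut X|}$ counts, after normalization, the morphisms $f\colon X\to L$ with cone $Y$; equivalently it records the orbit set $V(X,Y;L)$ of triangles $X\to L\to Y\to X[1]$, while $g_{YX}^{L}$ records $V(Y,X;L)$, the triangles $Y\to L\to X\to Y[1]$. So the whole content is a comparison of these two families of triangles through a fixed decomposable middle term.

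First I would reduce each constant modulo $(q-1)$ using Proposition \ref{mainproposition}(2), writing $g_{XY}^{L}=\sum_{\alpha\in V(X,Y;L)}\frac{|\ed X_{1}(\alpha)|}{|\hom(X[1],Y)n(\alpha)|\,|\aut X_{1}(\alpha)|}$. Since $X$ is indecomposable the summand $X_{1}(\alpha)$ is forced to be $0$ or $X$, and by the normal form of Proposition \ref{mainproposition}(1) (where $n_{11}$ is an isomorphism $Y_{1}(\alpha)\to X_{1}(\alpha)[1]$) it equals $X$ only when $Y\cong X[1]$, i.e. $X\cong Y[1]$. In the generic case $X\ncong Y[1]$ every $X_{1}(\alpha)$ vanishes, so each summand collapses to $q^{-t(\alpha)}$, the reciprocal order of an $\mathbb{F}_{q}$-vector space; hence $q^{-t(\alpha)}\equiv 1\pmod{q-1}$ and $g_{XY}^{L}\equiv|V(X,Y;L)|$ in $\bbz[\frac{1}{q}]/(q-1)$, and symmetrically $g_{YX}^{L}\equiv|V(Y,X;L)|$. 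Thus in the generic case the lemma becomes the purely numerical identity $|V(X,Y;L)|\equiv|V(Y,X;L)|\pmod{q-1}$.

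Establishing this orbit-count identity is the step I expect to be the main obstacle, and it genuinely uses that $L$ is decomposable: for indecomposable $L$ the two counts need not be equal, so decomposability cannot be a cosmetic hypothesis. Through the connecting morphism I would identify $V(X,Y;L)$ with the $\aut X\times\aut Y$-orbits of $\{h\in\hom(Y,X[1])\mid \mathrm{Cone}(h)\cong L[1]\}$, and $V(Y,X;L)$ with the orbits of $\{h'\in\hom(X,Y[1])\mid \mathrm{Cone}(h')\cong L[1]\}$. When $L\cong X\oplus Y$ (the split case $h=0$) both sides reduce to the explicit values recorded in the remark following Lemma \ref{mainlemma} and in Proposition \ref{numbers}(5)(6), which match modulo $(q-1)$. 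When $L$ is decomposable but $L\ncong X\oplus Y$, every completing triangle has $h\neq 0$, and here I would produce the matching by applying the octahedral axiom to a splitting $L=L_{1}\oplus L_{2}$ — precisely the mechanism used in Lemmas \ref{commonsummand} and \ref{stablesubgroup} — so that the decomposition of the middle term provides the room to interchange the roles of $X$ and $Y$. Since modulo $(q-1)$ it suffices to match these orbit sets only up to $\mathbb{F}_{q}$-vector-space (affine) fibres, which contribute factors $\equiv 1$, any correspondence that is bijective on orbits up to such fibres will close the argument.

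Finally I would treat the degenerate case $X\cong Y[1]$ separately, since there the factor $|\aut X_{1}(\alpha)|=|\aut X|$ is $\equiv 0\pmod{q-1}$ and the individual constants $g_{XY}^{L}$, $g_{YX}^{L}$ need not lie in $\bbz[\frac{1}{q}]$ — only their difference does. Here I would compute $g_{Y[1],Y}^{L}$ and $g_{Y,Y[1]}^{L}$ for decomposable $L$ directly, as in the degenerate branches of the proof of Theorem \ref{associativity}, invoking $|\aut X|=|\mathrm{radEnd}X|(q^{d(X)}-1)$ together with Proposition \ref{numbers}(5)(6) to see that the $q^{d(X)}-1$ denominators cancel in the difference, leaving $0$ in $\bbz[\frac{1}{q}]/(q-1)$. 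Alternatively, in this case one may simply transcribe the argument of \cite[Lemma 7.4]{PX2000} with derived Hall numbers in place of the original Hall numbers, which is exactly the substitution on which the present paper is built.
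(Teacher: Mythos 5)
First, a point of comparison: the paper does not prove this lemma at all --- it is imported verbatim from \cite[Lemma 7.4]{PX2000} --- so any argument is ``different from the paper's''. Your opening reduction is fine: for $L$ decomposable (hence nonzero), the normal form of Proposition \ref{mainproposition}(1) forces $X_1(\alpha)=0$ for every $\alpha\in V(X,Y;L)$ (if $X_1(\alpha)=X$ then $Y\cong X[1]$ and the triangle splits off all of $L$, giving $L=0$), so $g_{XY}^{L}\equiv F_{XY}^{L}$ and $g_{YX}^{L}\equiv F_{YX}^{L}$ in $\bbz[\frac{1}{q}]/(q-1)$; in particular the ``degenerate case $X\cong Y[1]$'' you postpone to the end never actually occurs for decomposable $L$.

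The core of your argument, however, has a genuine gap, and it starts with a false identification. $V(X,Y;L)$ is the quotient of the set of triangles by $\aut X\times\aut Y$ \emph{only}; two triangles with the same connecting morphism $h$ differ by an automorphism of $L$, not of $X$ and $Y$, so the natural surjection from $V(X,Y;L)$ onto the $\aut X\times\aut Y$-orbits of $\{h\in\mathrm{Hom}(Y,X[1])\mid \mathrm{Cone}(h)\cong L[1]\}$ has fibres which are essentially $\aut L$-orbits --- and those fibres are precisely what makes the lemma true. Concretely, in the root category of $A_3$ ($1\to 2\to 3$) take $X=P_2$, $Y=I_2$, $L=P_1\oplus S_2$ (the Auslander--Reiten triangle). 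Then $\{h\in\mathrm{Hom}(Y,X[1])\mid \mathrm{Cone}(h)\cong L[1]\}$ is a single nonzero orbit, while $\{h'\in\mathrm{Hom}(X,Y[1])\mid \mathrm{Cone}(h')\cong L[1]\}$ is \emph{empty} because $\mathrm{Hom}(X,Y[1])=\mathrm{Ext}^1_A(P_2,I_2)=0$; no correspondence between these two sets, up to affine fibres or otherwise, can exist. Nevertheless the lemma holds there: a direct count gives $g_{XY}^{L}=F_{XY}^{L}=q-1\equiv 0\equiv F_{YX}^{L}=g_{YX}^{L}$. The congruence is produced not by matching the two families of triangles but by the action of the automorphisms $\mathrm{diag}(\lambda,1)\in\aut(L_1\oplus L_2)$ on $V(X,Y;L)$, whose generic orbits have length $q-1$ (compare the role of $\aut Z$ in the proof of Lemma \ref{mainlemma}); what must then be compared between the two sides are only the degenerate loci where this action is not free. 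Your decisive step --- ``apply the octahedral axiom to a splitting $L=L_1\oplus L_2$ so that the decomposition provides the room to interchange $X$ and $Y$'' --- names no map and, as the example shows, cannot be repaired within your framework of matching connecting morphisms. Since this step is the entire content of the lemma, the proof is not complete; the counting argument via the $\aut L$-action, as in \cite{PX2000} or \cite{Hubery2004}, is the route to take.
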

 By Lemma \ref{indecomposable}, the definition of Lie
bracket is well defined.

\begin{remark}\label{integer}
\begin{enumerate}
    \item We refer to \cite{Hubery2004} to replace
$(h_X\mid h_Y)$ in \cite{PX2000} by  $(\tilde{h}_X\mid h_Y).$
    \item In \cite{PX2000}, the Lie bracket is defined by $[u_X, u_Y] :=\sum_{[L]\in
ind\mc_2}(F_{YX}^{L}-F_{XY}^{L})u_{L}.$ However, in
$\bbz[\frac{1}{q}]/(q-1),$
$g_{YX}^{L}-g_{XY}^{L}=F_{YX}^{L}-F_{XY}^{L}.$
\end{enumerate}
\end{remark}

\begin{Prop}\label{Jacobi}
Let $X,Y,Z$ be indecomposable objects in $\mc_2.$ Then in
$\bbz[\frac{1}{q}]/(q-1)$ we have
$$
[[u_X, u_Y], u_Z]=[[u_X, u_Z], u_Y]+[u_X, [u_Y, u_Z]]
$$
\end{Prop}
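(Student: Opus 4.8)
The plan is to prove the Jacobi identity by reducing it to the "almost associativity" statement established in Corollary~\ref{almostassociativity}, exactly as ordinary associativity of a ring induces the Jacobi identity on its commutators. The bracket $[u_X,u_Y]$ decomposes into a part $[u_X,u_Y]_{\mathfrak n}$ living in $\mathfrak n$ and a correction term $\delta_{X,Y[1]}\tilde h_X$ living in $\bold h$; since the two summands of $\bold g_{(q-1)}$ behave differently, I would organize the computation by evaluating the identity against each component separately. Concretely, I would expand both sides of $[[u_X,u_Y],u_Z]=[[u_X,u_Z],u_Y]+[u_X,[u_Y,u_Z]]$ using the definitions, and verify equality of the $\mathfrak n$-components (i.e.\ the coefficient of each $u_M$, $M\neq 0$ indecomposable) and of the $\bold h$-components independently.

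First I would handle the $\mathfrak n$-part. Writing everything in terms of the structure constants $g^L_{XY}$, the double brackets expand, via $[u_X,u_Y]_{\mathfrak n}(L)=g^L_{YX}-g^L_{XY}$ and the definition of multiplication, into sums of the associator-type quantities that appear in Theorem~\ref{associativity} and Corollary~\ref{transitive}. The governing observation is that evaluating a double commutator at an indecomposable $M$ produces precisely the differences
$$
\sum_{[L]}g^L_{XY}g^M_{ZL}-\sum_{[L']}g^{L'}_{ZX}g^M_{L'Y}
$$
(and its cyclic/permuted analogues) whose controlled failure of associativity is exactly the content of Theorem~\ref{associativity}(1)--(3). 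So the key step is to assemble the six terms of the Jacobi identity, group them into three associator expressions, and apply Corollary~\ref{almostassociativity}: because the \emph{new} multiplication with the variants $\theta_X$ is genuinely associative in $\bbz[\frac1q]/(q-1)$, every associator vanishes, and the $\mathfrak n$-components of the two sides agree. Here I must be careful that Lemma~\ref{indecomposable} guarantees that only indecomposable $L$ contribute to $[\cdot,\cdot]_{\mathfrak n}$, so the sums over $[L]$ are genuinely over $\mathrm{ind}\,\mc_2$ and the reduction to the indecomposable triple $(X,Y,Z)$ of Theorem~\ref{associativity} is legitimate.

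Next I would treat the $\bold h$-part, which is where the $\delta_{X,Y[1]}\tilde h_X$ terms and the adjoint action $[\tilde h_X,u_Y]=-(\tilde h_X\mid h_Y)u_Y$ interact. The point is that the anomalous (nonassociative) terms in Theorem~\ref{associativity}, namely $-\tfrac{\dim_k\mathrm{Hom}(Z,X)}{d(X)}$ and $\tfrac{\dim_k\mathrm{Hom}(X,Y)}{d(X)}$, are precisely the terms reproduced by the bracket $[\tilde h, u]$ via the bilinear form $(\tilde h_X\mid h_Y)$; this is exactly why the $\theta_X$ were introduced with multiplication rules (3) and (4) matching those anomalies. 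So I would expand the $\bold h$-valued contributions on both sides, use the symmetry of $(-\mid-)$ together with $[\bold h,\bold h]=0$, and check that the Kronecker-delta bookkeeping for the cases $X\cong Y[1]$, $X\cong Z[1]$, $Y\cong Z[1]$ matches the three cases of Theorem~\ref{associativity}.

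The main obstacle will be the careful case analysis and sign bookkeeping in matching the $\bold h$-component anomalies to the adjoint-action terms: the identity $(\tilde h_X\mid h_Y)\in\bbz[\frac1q]$ and the relation $\dim_k\mathrm{Hom}(X,Y)/d(X)\in\bbz$ (used already in the proof of Proposition~\ref{numbers}(5)) must be invoked to see that the comparison takes place in $\bbz[\frac1q]/(q-1)$ and that the deltas align correctly under the reflections $Y\leftrightarrow Z$, $X\cong Y[1]$ versus $X\cong Z[1]$. Once the associator terms are seen to cancel by Corollary~\ref{almostassociativity} and the residual scalar terms are identified with the bilinear form via $\tilde h_X$, the identity follows; the genuinely nontrivial analytic input has already been absorbed into Theorem~\ref{associativity} and its corollary, so this proof is essentially a translation of associativity into bracket language.
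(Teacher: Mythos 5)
Your proposal is correct and follows essentially the same route as the paper: decompose the bracket into its $\mathfrak n$- and $\bold h$-components, reduce the $\mathfrak n$-part (via Lemma \ref{indecomposable}) to the associator expressions controlled by Theorem \ref{associativity} and Corollary \ref{almostassociativity}, and match the anomalous terms of cases (1)--(2) against the adjoint action $[\tilde h_X,u_Z]$. The only ingredient you leave implicit is the purely $\bold h$-valued cancellation $(g_{YX}^{Z[1]}-g_{XY}^{Z[1]})\tilde h_Z-(g_{ZX}^{Y[1]}-g_{XZ}^{Y[1]})\tilde h_Y-(g_{YZ}^{X[1]}-g_{ZY}^{X[1]})\tilde h_X=0$, which the paper derives from the symmetries $g_{YX}^{Z[1]}/d(Z)=g_{XZ}^{Y[1]}/d(Y)=g_{ZY}^{X[1]}/d(X)$ together with $h_X+h_Y+h_Z=0$.
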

\begin{proof}
 By definition, we know
$$[[u_X, u_Y], u_Z]=[[u_X, u_Y]_{\mathfrak{n}}, u_Z]_{\mathfrak{n}}-(g_{YX}^{Z[1]}-g_{XY}^{Z[1]})\tilde{h}_{Z}-\delta_{X,Y[1]}(h_X\mid h_Z)/d(X)u_Z.$$
By Lemma \ref{indecomposable}, we have
$$
[u_X, u_Y]_{\mathfrak{n}}=u_Xu_Y-u_Yu_X
$$
and
\begin{eqnarray}
   && [[u_X, u_Y]_{\mathfrak{n}}, u_Z]_{\mathfrak{n}}(M) \nonumber \\
   &=& \sum_{L\in
\mathrm{ind}\mc_2}(g_{YX}^{L}-g^{L}_{XY})[u_L, u_Z](M) \nonumber \\
   &=& \sum_{L\in
\mc_2}(g_{YX}^{L}-g^{L}_{XY})[u_L, u_Z](M) \nonumber \\
   &=&
   [(u_Xu_Y)u_Z-(u_Yu_X)u_Z-u_Z(u_Xu_Y)+u_Z(u_Yu_X)](M)\nonumber\\
   &&-g_{YX}^{M\oplus Z[1]}/|\mathrm{Aut} Z|+g_{XY}^{M\oplus
Z[1]}/|\mathrm{Aut} Z|+g_{YX}^{M\oplus Z[1]}/|\mathrm{Aut}
Z|-g_{XY}^{M\oplus Z[1]}/|\mathrm{Aut} Z|\nonumber\\
&=&
   [(u_Xu_Y)u_Z-(u_Yu_X)u_Z-u_Z(u_Xu_Y)+u_Z(u_Yu_X)](M).\nonumber
\end{eqnarray}

\nd Let $\bigtriangleup$ be the set of the permutations of
$(X,Y,Z).$ Then
$$
\bigtriangleup=\{(X,Y,Z), (Y,X,Z), (Y,Z,X), (Z,Y,X), (X,Z,Y),
(Z,X,Y)\}.
$$
Define $\bigtriangleup^{+}=\{(Y,X,Z), (Z,Y,X), (X,Z,Y))\}$ and
$\bigtriangleup^{-}=\bigtriangleup\setminus\bigtriangleup^{+}.$ If
$X, Y$ and $Z$ satisfy one of the following properties
\begin{enumerate}
    \item $ X\cong Z\cong Y[1]$,
    \item $ X\cong Y\cong Z[1]$,
    \item $ Y\cong Z\cong X[1]$,
    \end{enumerate} then it is clear that the identity in the lemma holds.
    If $X, Y, Z$ and $M$ satisfy the condition in Theorem \ref{associativity} (3),  then
using Lemma \ref{indecomposable} and Theorem \ref{associativity}, we
have
\begin{eqnarray}
   &&\{[[u_X, u_Z]_{\mathfrak{n}}, u_Y]_{\mathfrak{n}}+[u_X, [u_Y,
u_Z]_{\mathfrak{n}}]_{\mathfrak{n}}-[[u_X, u_Y]_{\mathfrak{n}},
u_Z]_{\mathfrak{n}}\}(M)\nonumber \\
   &=& \{\sum_{(A,B,C)\in
\bigtriangleup^{+}}((u_Au_B)u_C-u_A(u_Bu_C))-\sum_{(A,B,C)\in
\bigtriangleup^{-}}((u_Au_B)u_C-u_A(u_Bu_C))\}(M)\nonumber \\
    &=& \{\sum_{(A,B,C)\in
\bigtriangleup^{+}}[((u_Au_B)u_C-u_A(u_Bu_C))-((u_Cu_B)u_A-u_C(u_Bu_A))]\}(M)\nonumber \\
    &=& 0.\nonumber
\end{eqnarray}
It is enough to prove that
\begin{equation}\label{zero}
(g_{YX}^{Z[1]}-g_{XY}^{Z[1]})\tilde{h}_{Z}-(g_{ZX}^{Y[1]}-g_{XZ}^{Y[1]})\tilde{h}_{Y}-(g_{YZ}^{X[1]}-g_{ZY}^{X[1]})\tilde{h}_{X}=0.
\end{equation}
By the symmetry of the equation, we need to prove that
$$
g_{YX}^{Z[1]}\tilde{h}_{Z}+g_{XZ}^{Y[1]}\tilde{h}_{Y}+g_{ZY}^{X[1]}\tilde{h}_{X}=0.
$$
By definition, we have, over $\bbz[\frac{1}{q}]/(q-1)$,
$$\frac{g_{YX}^{Z[1]}}{d(Z)}=\frac{|(Y, Z[1])_X|}{|\mathrm{Aut} Y|\cdot d(Z)}=\frac{|(Y[1], Z)_{X[1]|}}{|\mathrm{Aut} Z|\cdot d(Y)}=\frac{g_{XZ}^{Y[1]}}{d(Y)}$$
and
$$\frac{g_{YX}^{Z[1]}}{d(Z)}=\frac{|(Z[1], X)_{Y[1]}|}{|\mathrm{Aut} X|\cdot d(Z)}=\frac{|(Z, X)_{Y|}}{|\mathrm{Aut} Z|\cdot d(X)}=\frac{g_{ZY}^{X[1]}}{d(X)}.$$
The equation \eqref{zero} follows this and the property
$h_Z+h_Y+h_X=0.$

 This shows that
the Jacobi identity holds. Now we assume that the condition of
Theorem \ref{almostassociativity} (1) holds, i.e., $X\cong Y[1]$ and
$Z\cong M$ but $X\ncong Z$ and $X\ncong Z[1].$ We note that Lemma
\ref{indecomposable} and the properness condition imply $[u_X,
u_Y]_{\mathfrak{n}}=0$ if $X\cong Y[1]$. Following Theorem
\ref{associativity} (1), we have
$$[(u_Xu_Y)u_Z-u_X(u_Yu_Z)](M)=-\frac{\mathrm{dim}_{k}\mathrm{Hom}(Z,
Y)}{d(X)}$$ and
$$[(u_Yu_X)u_Z-u_Y(u_Xu_Z)](M)=-\frac{\mathrm{dim}_{k}\mathrm{Hom}(Z,
X)}{d(X)}.$$ By Lemma \ref{indecomposable}, this implies
$$
[u_Y(u_Xu_Z)-u_X(u_Yu_Z)](M)=\frac{\mathrm{dim}_{k}\mathrm{Hom}(Z,
X)-\mathrm{dim}_{k}\mathrm{Hom}(Z, Y)}{d(X)}.
$$
Similarly, using Theorem \ref{associativity} (2), we have
$$
[(u_Zu_Y)u_X-(u_Zu_X)u_Y](M)=-\frac{\mathrm{dim}_{k}\mathrm{Hom}(X,
Z)-\mathrm{dim}_{k}\mathrm{Hom}(X, Z[1])}{d(X)}.
$$
Therefore, we obtain
$$
\{[[u_X, u_Z]_{\mathfrak{n}}, u_Y]_{\mathfrak{n}}+[u_X, [u_Y,
u_Z]_{\mathfrak{n}}]_{\mathfrak{n}}\}(Z)=-(h_X\mid h_Z)/d(X)[[u_X,
u_Y],u_Z](Z).
$$
In the same way, the Jacobi identity holds for the case that $X\cong
Z[1]$ and $Y\cong M$ but $X\ncong Y$ and $X\ncong Y[1].$ This
completes the proof of the proposition.
\end{proof}

The following is the main result  in \cite{PX2000}.
\begin{theorem}\label{maintheorem}
Endowed with the above Lie bracket, $\bold{g}_{(q-1)}$ is a Lie
algebra over $\mathbb{Z}/(q-1).$
\end{theorem}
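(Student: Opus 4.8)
The plan is to verify the two defining axioms of a Lie algebra — bilinearity together with antisymmetry, and the Jacobi identity — for the bracket on $\bold{g}_{(q-1)} = \bold{g}/(q-1)\bold{g}$, working throughout in $\bbz[\frac{1}{q}]/(q-1)$ and reducing to $\bbz/(q-1)$ at the end. The bracket is defined piecewise on the three types of generators (the $u_X$ for $X \in \mathrm{ind}\,\mc_2$ and the $\tilde{h}_X$ spanning $\bold{h}$), so I would organize the verification according to which pieces the three arguments fall into.

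First I would record that the bracket is well defined: $[u_X, u_Y]_{\mathfrak{n}}$ expands over isomorphism classes $[L]$, and Lemma \ref{indecomposable} guarantees that only indecomposable $L$ contribute, so the sum lands in $\mathfrak{n}$; the correction term $\delta_{X,Y[1]}\tilde{h}_X$ lands in $\bold{h}$, and Remark \ref{integer} confirms the structure constants reduce correctly modulo $q-1$. Antisymmetry $[u_X,u_Y] = -[u_Y,u_X]$ is immediate from the definition, since $[u_X,u_Y]_{\mathfrak{n}}(L) = g_{YX}^L - g_{XY}^L$ is visibly antisymmetric in $X,Y$ and $\delta_{X,Y[1]}\tilde{h}_X$ matches $\delta_{Y,X[1]}\tilde{h}_Y$ under $T^2 \cong 1$ (both are nonzero exactly when $X \cong Y[1]$, and then $\tilde{h}_X = \tilde{h}_{Y[1]} = \tilde{h}_Y$ since $[1]$ acts trivially on the relevant dimension vectors modulo the bilinear form). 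The brackets involving $\bold{h}$ are antisymmetric by fiat, and $[\bold{h},\bold{h}] = 0$.

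The heart of the proof is the Jacobi identity, and this is exactly the content of Proposition \ref{Jacobi}, which I would invoke directly for the generators $u_X, u_Y, u_Z$ with $X,Y,Z$ indecomposable. The remaining cases of the Jacobi identity — where one or more of the three arguments is a generator $\tilde{h}_X$ of $\bold{h}$ — I would handle separately using the explicit rules for $[\tilde{h}_X, u_Y] = -(\tilde{h}_X \mid h_Y)u_Y$ and $[\bold{h},\bold{h}] = 0$. When all three arguments lie in $\bold{h}$ the identity is trivial; when two lie in $\bold{h}$ it reduces to the bilinearity of $(-\mid-)$; and when exactly one argument, say $u_Z$, is replaced by $\tilde{h}_W$, the Jacobi identity unwinds to the statement that the bilinear form $(\tilde{h}_W \mid -)$ acts as a derivation compatible with the $\mathfrak{n}$-bracket, which follows from the additivity of $\mathrm{dim}_k \mathrm{Hom}$ on triangles together with Lemma \ref{indecomposable}.

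I expect the main obstacle to be bookkeeping rather than a deep new idea: Proposition \ref{Jacobi} already absorbs the genuinely hard analysis — the `almost' associativity of Theorem \ref{associativity} and Corollary \ref{almostassociativity}, where the noncommutativity of composition and the failure of the homological finiteness conditions force the passage to $\bbz[\frac{1}{q}]/(q-1)$. The delicate point I would watch most carefully is the consistency of the $\tilde{h}$-correction terms across all permutations, i.e.\ that the three $\delta$-contributions combine to zero exactly as in equation \eqref{zero}, using $h_X + h_Y + h_Z = 0$ in the degenerate configurations and the symmetry relations $\frac{g_{YX}^{Z[1]}}{d(Z)} = \frac{g_{XZ}^{Y[1]}}{d(Y)} = \frac{g_{ZY}^{X[1]}}{d(X)}$ established in the proof of Proposition \ref{Jacobi}. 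Once these cancellations are confirmed in every case, the two axioms hold in $\bbz[\frac{1}{q}]/(q-1)$, and since all structure constants actually lie in the subring $\bbz/(q-1)$ by Remark \ref{integer}, the bracket restricts to make $\bold{g}_{(q-1)}$ a Lie algebra over $\mathbb{Z}/(q-1)$, as claimed.
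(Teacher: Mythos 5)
Your proposal follows the paper's proof essentially step for step: the paper likewise reduces the Jacobi identity for three generators $u_X,u_Y,u_Z$ to Proposition \ref{Jacobi}, disposes of the remaining identities involving $\tilde h$-generators by direct computation from the defining rules (referring to \cite{PX2000} and \cite{Hubery2004} for the details), and uses Remark \ref{integer} to see that the structure constants $g_{YX}^L-g_{XY}^L=F_{YX}^L-F_{XY}^L$ are integers, so the bracket descends to $\mathbb{Z}/(q-1)$. One sign in your antisymmetry check needs correcting: in the Grothendieck group one has $h_{Y[1]}=-h_Y$ (from the triangle $Y\to 0\to Y[1]\to Y[1]$), so when $X\cong Y[1]$ the correct relation is $\tilde h_X=\tilde h_{Y[1]}=-\tilde h_Y$, not $\tilde h_X=\tilde h_Y$; it is precisely this minus sign that gives $\delta_{X,Y[1]}\tilde h_X+\delta_{Y,X[1]}\tilde h_Y=0$ and hence $[u_X,u_Y]=-[u_Y,u_X]$, whereas with the sign as you stated it the two correction terms would add rather than cancel.
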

\begin{proof}
Let $X,Y$ and $Z\in \ind\mc_2.$ By Remark 2.1, we know that in
$\bbz[\frac{1}{q}]/(q-1),$
$g_{YX}^{L}-g_{XY}^{L}=F_{YX}^{L}-F_{XY}^{L}$ and
$F_{YX}^{L}-F_{XY}^{L}\in \bbz.$ By Proposition \ref{Jacobi}, we
only need to verify the Jacobi identities
$$
[[\tilde{h}_X, u_Y], u_Z]=[[\tilde{h}_X, u_Z], u_Y]+[\tilde{h}_X,
[u_Y, u_Z]]
$$
and
$$
[[\tilde{h}_X, \tilde{h}_Y], u_Z]=[[\tilde{h}_X, u_Z],
\tilde{h}_Y]+[\tilde{h}_X, [\tilde{h}_Y, u_Z]]
$$
hold. They can be easily verified by the definition of Lie bracket.
We refer to \cite{PX2000} or \cite{Hubery2004} for details.
\end{proof}

By Corollary \ref{almostassociativity}, we can give a new version of
the above theorem.  For any $M\in\mc_2,$ we set
$${\tilde{h}}^*_M=\theta_M-\theta_{M[1]} \mbox{ and } h^*_M=d_M{\tilde{h}}^*_M.$$
It is clear that ${\tilde{h}}^*_M=-{\tilde{h}}^*_{M[1]}.$ Let
$\bold{h^*}$ be the abelian group generated by ${\tilde{h}}^*_M$ for
$M\in \mathrm{ind}\mc_2.$ The symmetric bilinear form for $\bold{h}$
naturally induces the symmetric bilinear form for $\bold{h^*}$,
denoted by $(-\mid-)^*.$ Moreover, we have the following result
analog to the additivity of dimension vectors in $G(\mc_2)$.
\begin{lemma}\label{cartanpart}
Let $M\xrightarrow{f} L\xrightarrow{g} N\xrightarrow{h} M[1]$ be a
triangle for $M, N$ and $L$ in $\mc_2.$ Then for any $Z\in \mc_2$,
we have
$$
(h^*_M+h^*_N\mid h^*_Z)^*=(h^*_L\mid h^*_Z)^*.
$$
\end{lemma}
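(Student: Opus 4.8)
The plan is to reduce the claimed identity $(h^*_M + h^*_N \mid h^*_Z)^* = (h^*_L \mid h^*_Z)^*$ to a statement about ordinary dimension vectors in $G(\mc_2)$, for which the additivity is already built into the definition of the Grothendieck group. First I would unwind the definitions: since $h^*_M = d_M \tilde h^*_M$ and $\tilde h^*_M = \theta_M - \theta_{M[1]}$, the symmetric form $(-\mid-)^*$ on $\bold{h^*}$ is, by construction, induced from the form $(-\mid-)$ on $\bold{h}$ through the correspondence $\theta_M \leftrightarrow$ (the appropriate generator attached to $M$). The key observation is that under this induced form the class $h^*_M$ should behave exactly like the dimension vector $h_M \in G(\mc_2)$, because the variant $\theta_M$ was introduced precisely to shadow the Cartan datum attached to $M$, and the relation $\tilde h^*_M = -\tilde h^*_{M[1]}$ mirrors the relation $h_{M[1]} = -h_M$ in the Grothendieck group (recall $T^2 \cong 1$, so $[1]$ acts as an involution sending a class to its negative).

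Second, I would record the defining triangle relation in $G(\mc_2)$: the triangle $M \xrightarrow{f} L \xrightarrow{g} N \xrightarrow{h} M[1]$ yields $h_L = h_M + h_N$ by the very definition of the Grothendieck group of a triangulated category. The heart of the argument is then to transport this additive relation from $\bold{h}$ (where it is automatic) to $\bold{h^*}$ (where it is the assertion to be proved), using the fact that $(-\mid-)^*$ is defined to be the form induced from $(-\mid-)$. Concretely, I would show that the linear functional $Z \mapsto (h^*_{(-)} \mid h^*_Z)^*$ is additive on triangles in its first slot, which amounts to checking that the assignment $M \mapsto h^*_M$ factors through $G(\mc_2)$, i.e. that $h^*_L - h^*_M - h^*_N$ pairs trivially against every $h^*_Z$. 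Because the form $(-\mid-)$ on $\bold{h}$ is nondegenerate-compatible with the Grothendieck-group structure (it is expressed entirely through the $\mathrm{Hom}$-spaces $\mathrm{dim}_k\mathrm{Hom}(X,Y) - \mathrm{dim}_k\mathrm{Hom}(X,Y[1]) + \cdots$, which are themselves additive on triangles), the vanishing of $h^*_L - h^*_M - h^*_N$ against all $h^*_Z$ follows from the additivity of each $\mathrm{Hom}$-dimension term along the triangle.

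The cleanest route, which I would carry out in detail, is therefore: (i) expand both sides of the desired equality using the definition of $h^*$ and of the induced form $(-\mid-)^*$, so that each side becomes a $\bbz[\tfrac1q]$-linear combination of terms of the shape $\mathrm{dim}_k \mathrm{Hom}(A, B) - \mathrm{dim}_k \mathrm{Hom}(A, B[1])$ with $A \in \{M, N, L\}$ (or their shifts) and $B = Z$ (or $Z[1]$); (ii) for each such term, apply the long exact sequence obtained by applying $\mathrm{Hom}(-, Z)$ and $\mathrm{Hom}(-, Z[1])$ to the triangle $M \to L \to N \to M[1]$, which gives $\mathrm{dim}_k\mathrm{Hom}(L, Z) - \mathrm{dim}_k\mathrm{Hom}(L, Z[1])$ as the alternating sum of the corresponding dimensions for $M$ and $N$; and (iii) assemble these additivity relations to match the two sides. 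I expect the main obstacle to be purely bookkeeping: the $2$-periodicity $T^2 \cong 1$ collapses the usual long exact sequence into a short periodic one, and one must be careful that the alternating signs in the Euler-form computation combine correctly so that the shifts $[1]$ appearing in $\tilde h^*_M = \theta_M - \theta_{M[1]}$ do not introduce spurious terms. Once the single additivity identity $h^*_L = h^*_M + h^*_N$ (as a relation valid after pairing with any $h^*_Z$) is secured, the stated equality is immediate by bilinearity of $(-\mid-)^*$.
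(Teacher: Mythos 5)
Your proposal is correct and follows essentially the same route as the paper: identify the form $(-\mid-)^*$ with the alternating sum of $\mathrm{Hom}$-dimensions and then obtain its additivity along the triangle from the long exact sequences gotten by applying $\mathrm{Hom}(Z,-)$ and $\mathrm{Hom}(-,Z)$. The extra discussion of transporting the relation $h_L=h_M+h_N$ from $G(\mc_2)$ to $\bold{h}^*$ is harmless but not needed once the Euler-form additivity is established.
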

\begin{proof}
Applying the functor $\mathrm{Hom}(Z, -)$ on the triangle, we obtain
a long exact sequence
$$
\xymatrix{\cdots \ar[r]&\mathrm{Hom}(Z,
N[-1])\ar[r]^{h^*}&\mathrm{Hom}(Z, M)\ar[r]&\cdots}
$$
Then we have
$$
\mathrm{dim}_k\mathrm{Hom}(Z, M)-\mathrm{dim}_k\mathrm{Hom}(Z,
M[1])+\mathrm{dim}_k\mathrm{Hom}(Z,
N)-\mathrm{dim}_k\mathrm{Hom}(Z,N[1])$$$$=\mathrm{dim}_k\mathrm{Hom}(Z,
L)-\mathrm{dim}_k\mathrm{Hom}(Z, L[1]).
$$
Applying the functor $\mathrm{Hom}(-, Z)$ on the triangle, we obtain
$$
\mathrm{dim}_k\mathrm{Hom}(M, Z)-\mathrm{dim}_k\mathrm{Hom}(M[1],
Z)+\mathrm{dim}_k\mathrm{Hom}(N, Z)-\mathrm{dim}_k\mathrm{Hom}(N[1],
Z)$$$$=\mathrm{dim}_k\mathrm{Hom}(L,
Z)-\mathrm{dim}_k\mathrm{Hom}(L[1], Z).
$$
This completes the proof of the proposition.
\end{proof}

\begin{lemma}\label{commutativity}For any $X, Y \mbox{ and } Z\in \mathrm{ind}\mc_2$, we have
 $$(\theta_X\cdot \theta_Y)\cdot u_Z=(\theta_Y\cdot \theta_X)\cdot u_Z\mbox{ and }u_Z\cdot(\theta_X\cdot \theta_Y) =\cdot u_Z\cdot(\theta_Y\cdot \theta_X).$$
\end{lemma}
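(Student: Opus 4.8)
The plan is to prove the identity $(\theta_X\cdot \theta_Y)\cdot u_Z=(\theta_Y\cdot \theta_X)\cdot u_Z$ by unravelling both sides using the multiplication rules (3) and (4) defined just before Corollary \ref{almostassociativity}. Since $\theta_X$ and $\theta_Y$ are the new variants, the inner products $\theta_X\cdot\theta_Y$ and $\theta_Y\cdot\theta_X$ are not themselves covered by rules (1)--(4); these rules only tell us how $\theta$ interacts with an honest $u$. So the first thing I would do is interpret the products $(\theta_X\cdot\theta_Y)\cdot u_Z$ as being evaluated through associativity, that is, reading $\theta_X\cdot(\theta_Y\cdot u_Z)$, where the inner factor $\theta_Y\cdot u_Z$ is given by rule (4): $\theta_Y\cdot u_Z=-\dim_k\hom(Z,Y)/d(Y)$, a scalar in $\bbz[\frac{1}{q}]/(q-1)$.

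Once the inner product is a scalar $\lambda=-\dim_k\hom(Z,Y)/d(Y)$, the outer multiplication $\theta_X\cdot\lambda$ is just scalar multiplication, giving $\lambda\,\theta_X$. Doing the same on the other side, $(\theta_Y\cdot\theta_X)\cdot u_Z$ read as $\theta_Y\cdot(\theta_X\cdot u_Z)$ produces $\mu\,\theta_Y$ with $\mu=-\dim_k\hom(Z,X)/d(X)$. At this point the two sides do not look manifestly equal, so the real content must be that the intended bracketing is the other one: I would instead read the left-hand expressions as $(\theta_X\cdot \theta_Y)$ multiplied against $u_Z$ where the product $\theta_X\cdot\theta_Y$ lives in $\bold{h}^*$ and acts on $u_Z$ through the symmetric bilinear form $(-\mid-)^*$ introduced after Theorem \ref{maintheorem}. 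The key step is therefore to identify $\theta_X\cdot\theta_Y$ (up to the scalars produced by rules (3),(4)) with an element of $\bold{h}^*$ and to invoke the \emph{symmetry} of the bilinear form $(-\mid-)^*$, exactly as $(h^*_X\mid h^*_Z)^*=(h^*_Z\mid h^*_X)^*$. Because $(-\mid-)^*$ is symmetric, the contribution of $\theta_X\cdot\theta_Y$ against $u_Z$ is unchanged when $X$ and $Y$ are swapped, which yields the claimed equality; the second identity with $u_Z$ on the left follows identically using the left-module analogue of rules (3),(4) together with the same symmetry.

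The main obstacle I anticipate is making precise what $\theta_X\cdot\theta_Y$ means as a stand-alone object, since the listed multiplication rules only define $\theta$ against $u$'s and never $\theta$ against $\theta$; the proof must commit to interpreting nested products via the almost-associativity of Corollary \ref{almostassociativity} and then reduce everything to scalars times $\tilde{h}^*$-type elements. Concretely, I would reduce both sides to expressions of the form $-\dim_k\hom(Z,-)/d(-)$ times a $\theta$, and then show the \emph{difference} of the two sides vanishes in $\bbz[\frac{1}{q}]/(q-1)$ by invoking Lemma \ref{cartanpart}'s symmetric form together with the relation $\tilde{h}^*_M=-\tilde{h}^*_{M[1]}$. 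The delicate point is bookkeeping the index shifts $[1]$ correctly so that the $\hom(Z,X)$ and $\hom(Z,Y)$ terms pair up with the right $d(X),d(Y)$ denominators; once the symmetry $(\tilde{h}^*_X\mid h^*_Z)^*=(\tilde{h}^*_Z\mid h^*_X)^*$ is applied, the two orderings of $X,Y$ give identical scalars and the commutativity follows.
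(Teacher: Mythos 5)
The paper states Lemma \ref{commutativity} with no proof at all, so the only comparison available is with the argument it evidently intends, and your proposal does not arrive at that argument. The gap lies in how you read rules (3) and (4): you take $\theta_Y\cdot u_Z$ to be a bare scalar, so that $\theta_X\cdot(\theta_Y\cdot u_Z)$ becomes $\lambda\,\theta_X$ while the other bracketing gives $\mu\,\theta_Y$, and you correctly observe that these are not equal. But instead of questioning that reading, you escape into the bilinear form $(-\mid-)^*$. That cannot work: $(-\mid-)^*$ is symmetric in its \emph{two arguments}, i.e.\ it lets you exchange the $\bold{h}^*$-element with $h^*_Z$, whereas what you need is to exchange $X$ and $Y$ \emph{inside} the product $\theta_X\cdot\theta_Y$; moreover the form is introduced only after this lemma and is itself built out of the $\theta$'s, so invoking it here is circular. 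You also never say what $\theta_X\cdot\theta_Y$ is as a stand-alone element --- the step you yourself flag as the ``main obstacle'' --- and the proposal ends without resolving it.

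The reading forced by Corollary \ref{almostassociativity} (the correction term $\theta_X$ arising in $u_Y\cdot u_{Y[1]}$ must, after multiplication by $u_Z$, contribute to the coefficient of $u_M$ with $M\cong Z$) and by the analogy with $[\tilde{h}_X,u_Y]=-(\tilde{h}_X\mid h_Y)u_Y$ is that $\theta_X$ acts \emph{diagonally}: $\theta_X\cdot u_Z=-\frac{\dim_k\mathrm{Hom}(Z,X)}{d(X)}\,u_Z$ and $u_Z\cdot\theta_X=-\frac{\dim_k\mathrm{Hom}(X[1],Z)}{d(X)}\,u_Z$, i.e.\ scalars in $\bbz[\frac{1}{q}]/(q-1)$ times $u_Z$ itself. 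Interpreting the nested product by composition, $(\theta_X\cdot\theta_Y)\cdot u_Z=\theta_X\cdot(\theta_Y\cdot u_Z)=\lambda_X\lambda_Y\,u_Z$ and $(\theta_Y\cdot\theta_X)\cdot u_Z=\lambda_Y\lambda_X\,u_Z$, and likewise on the right; both identities then follow from nothing more than commutativity of multiplication in the scalar ring. No index-shift bookkeeping, no Lemma \ref{cartanpart}, and no symmetry of $(-\mid-)^*$ is needed.
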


Lemma \ref{cartanpart} and \ref{commutativity} strongly suggest us
to make the following assumptions for $\bold{h}^*.$ For any $X, Y$
and $L$ in $\mc_2$ with the triangle $X\rightarrow L\rightarrow
Y\rightarrow X[1]$, we have $h^*_X+h^*_Y=h^*_L$ and
$h^*_Xh^*_Y=h^*_Yh^*_X.$
 Let $\bold{g}^*={\bold{h}}^*\oplus
\mathfrak{n}$ and $\bold{g}^*_{(q-1)}=\bold{g}^*/(q-1)\bold{g}^*.$
Then the new multiplication in the last section  naturally induces a
Lie bracket over $\bold{g}^*_{(q-1)}$, i.e., for $X, Y\in
\mathrm{ind}\mc_2,$
$$[u_X, u_Y]^*:=u_X\cdot u_Y-u_Y\cdot u_X , \quad [{\tilde{h}}^*_X, u_Y]^*:={\tilde{h}}^*_X\cdot u_Y-u_Y\cdot {\tilde{h}}^*_X
$$
and $$[{\tilde{h}}^*_X, {\tilde{h}}^*_Y]^*:=0 .$$ Then we have an
analogue of Theorem \ref{maintheorem} which is a direct consequence
of Corollary \ref{almostassociativity} and the fact that the
`almost' associativity implies the Jacobi identity.
\begin{theorem}
With the above notation and assumptions,  $\bold{g}^*_{(q-1)}$ is a
Lie algebra over $\bbz/(q-1).$
\end{theorem}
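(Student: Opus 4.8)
The plan is to follow the proof of Theorem \ref{maintheorem}, but to feed in the sharper associativity defect supplied by Corollary \ref{almostassociativity} in place of Theorem \ref{associativity}, and to replace the Cartan part $\bold{h}$ by $\bold{h}^*$ throughout. Since $[u_X,u_Y]^*$ and $[\tilde{h}^*_X,u_Y]^*$ are defined as commutators of $\cdot$ and $[\tilde{h}^*_X,\tilde{h}^*_Y]^*=0$, antisymmetry and $\bbz/(q-1)$-bilinearity are immediate, so only the Jacobi identity must be established. First I would check well-definedness: by Lemma \ref{indecomposable} and Remark \ref{integer} the $\mathfrak{n}$-coefficients of $[u_X,u_Y]^*$ are $g_{YX}^L-g_{XY}^L=F_{YX}^L-F_{XY}^L\in\bbz$, supported on indecomposable $L$, together with a $\bold{h}^*$-summand proportional to $\tilde{h}^*_X$ occurring exactly when $X\cong Y[1]$; the mixed bracket $[\tilde{h}^*_X,u_Y]^*$ reduces, via rules (3),(4), $d(X[1])=d(X)$ and $\mathrm{Hom}(X[1],Y)\cong\mathrm{Hom}(X,Y[1])$, to $-(\tilde{h}^*_X\mid h^*_Y)^*u_Y$. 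Thus everything lands in $\bold{g}^*_{(q-1)}$, and by bilinearity the Jacobi identity need only be verified on triples of generators, which I would organize by the number of factors lying in $\bold{h}^*$.

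The decisive case is $a=u_X$, $b=u_Y$, $c=u_Z$ with $X,Y,Z\in\ind\mc_2$. Extending $\cdot$ bilinearly to $\bold{g}^*$, the Leibniz--Jacobiator of the commutator bracket is the purely formal identity
$$
[[u_X,u_Y]^*,u_Z]^*-[[u_X,u_Z]^*,u_Y]^*-[u_X,[u_Y,u_Z]^*]^*=\sum_{(A,B,C)}\mathrm{sgn}(A,B,C)\big((u_A\cdot u_B)\cdot u_C-u_A\cdot(u_B\cdot u_C)\big),
$$
where $(A,B,C)$ runs over the six permutations of $(X,Y,Z)$ with the sign of the permutation. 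Projecting onto $\mathfrak{n}$ and reading the coefficient at each indecomposable $M\neq0$, every summand is, up to relabelling the indecomposables, an associator controlled by Corollary \ref{almostassociativity} and hence vanishes in $\bbz[\frac1q]/(q-1)$; note that the Cartan corrections produced by the $\theta$-terms of the inner products are already absorbed here, since rules (3),(4) send them back into scalar multiples of $u_M$. The $\bold{h}^*$-projection is the genuinely new point: each associator carries a $\theta$-part coming from summands $u_{Z[1]}\cdot u_Z$ and $u_{X[1]}\cdot u_X$, so the Jacobiator retains a $\bold{h}^*$-component which, exactly as in the passage leading to \eqref{zero} in Proposition \ref{Jacobi}, assembles into $g_{YX}^{Z[1]}\tilde{h}^*_Z+g_{XZ}^{Y[1]}\tilde{h}^*_Y+g_{ZY}^{X[1]}\tilde{h}^*_X$. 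I would kill this using the symmetry $g_{YX}^{Z[1]}/d(Z)=g_{XZ}^{Y[1]}/d(Y)=g_{ZY}^{X[1]}/d(X)$ together with $h^*_X+h^*_Y+h^*_Z=0$, the latter now coming from Lemma \ref{cartanpart} applied to the triangle with vertices $X,Y,Z[1]$ present whenever these constants are nonzero (and $\tilde{h}^*_{Z[1]}=-\tilde{h}^*_Z$).

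For the remaining cases I would argue verbatim as in Theorem \ref{maintheorem}. If exactly one generator lies in $\bold{h}^*$, then $\tilde{h}^*_X$ acts through the scalar form $(\tilde{h}^*_X\mid-)^*$, and the Jacobi identity reduces to the additivity of this form along triangles, which is precisely Lemma \ref{cartanpart}. If two lie in $\bold{h}^*$, one side vanishes because $[\tilde{h}^*_X,\tilde{h}^*_Y]^*=0$ while the remaining two terms cancel by the symmetry of $(-\mid-)^*$ and the commutativity $h^*_Xh^*_Y=h^*_Yh^*_X$ furnished by Lemma \ref{commutativity}; if all three lie in $\bold{h}^*$ the identity is trivial. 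Assembling the cases and combining with antisymmetry shows $\bold{g}^*_{(q-1)}$ is a Lie algebra over $\bbz/(q-1)$.

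The step I expect to be the real obstacle is controlling the $\bold{h}^*$-component of the pure-$u$ Jacobiator, since Corollary \ref{almostassociativity} only asserts the vanishing of the $\mathfrak{n}$-coefficients of each associator and says nothing about its $\theta$-part; one must verify by hand that these $\theta$-parts aggregate into the symmetric combination above and cancel via Lemma \ref{cartanpart}. A secondary subtlety is the `almost' phenomenon itself: the individual triple products $(u_A\cdot u_B)\cdot u_C$ may be nonsense in $\bbz[\frac1q]/(q-1)$, so I must ensure the formal associator identity is invoked only after the terms have been paired into the well-defined associators of Corollary \ref{almostassociativity}, with the nonsensical contributions cancelling within each pair.
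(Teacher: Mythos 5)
Your proposal is correct and follows essentially the same route as the paper: reduce to the three Jacobi identities on generators, dispose of the mixed cases via Lemma \ref{cartanpart} (and Lemma \ref{commutativity}), and for the pure-$u_X,u_Y,u_Z$ case invoke Corollary \ref{almostassociativity} to kill the coefficients at every $M\neq 0$, leaving only the $\bold{h}^*$-component, which cancels by the same symmetry argument as equation \eqref{zero} together with $h^*_X+h^*_Y+h^*_Z=0$. You simply spell out in detail what the paper compresses into ``it is enough to prove the identity at $0$; the proof is similar to that of \eqref{zero}.''
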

\begin{proof}
Given indecomposable objects $X, Y$ and $Z$ in $\mc_2,$ we need to
prove the Jacobi identities
$$
[[u_X, u_Y]^{*}, u_Z]^{*}=[[u_X, u_Z]^{*}, u_Y]^{*}+[u_X, [u_Y,
u_Z]^{*}]^{*},
$$
$$
[[\tilde{h}^{*}_X, u_Y]^{*}, u_Z]^{*}=[[\tilde{h}^{*}_X, u_Z]^{*},
u_Y]^{*}+[\tilde{h}^{*}_X, [u_Y, u_Z]^{*}]^{*}
$$
and
$$
[[\tilde{h}^{*}_X, \tilde{h}_Y]^{*}, u_Z]^{*}=[[\tilde{h}^{*}_X,
u_Z]^{*}, \tilde{h}_Y]^{*}+[\tilde{h}^{*}_X, [\tilde{h}^{*}_Y,
u_Z]^{*}]^{*}
$$
hold. The second and third identities follows Proposition
\ref{cartanpart}. We prove the first identity. By Corollary
\ref{almostassociativity}, it is enough to prove
$$
[[u_X, u_Y]^{*}, u_Z]^{*}(0)=[[u_X, u_Z]^{*}, u_Y]^{*}(0)+[u_X,
[u_Y, u_Z]^{*}]^{*}(0).
$$
The proof is similar to the proof of equation \eqref{zero}.
\end{proof}
However, it is not clear whether there exists some explicit relation
between $\bold{g}^*_{(q-1)}$ and $\bold{g}_{(q-1)}$.
\\
\\
\nd {\textbf{Acknowledgements.}} Some part of this paper was written
while the author was staying at University of Bielefeld as Alexander
von Humboldt Foundation fellow. The author expresses his gratitude
to Professor C. M. Ringel for his hospitality.  The author would
like to thank the Alexander-von-Humboldt-Stiftung for a fellowship.

\bibliographystyle{amsplain}

\begin{thebibliography}{10}
\bibitem{Gabriel1972}
P. Gabriel, \textit{Unzerlegbare {D}arstellungen. {I}},
Manuscripta Math. \textbf{6} (1972), 71--103; correction, ibid.
\textbf{6} (1972), 309.

\bibitem{Hubery2004} A. Hubery, \textit{From triangulated categories to Lie algebras: A theorem of Peng and Xiao},
in Trends in representation theory of algebras and related topics,
51--66, Contemp. Math. {\bf 406}, Amer. Math. Soc., Providence, RI,
2006.
\bibitem{Lusztig2000} G.~Lusztig, \textit{Constructible functions on varieties attached to quivers}, in {Studies in memory of
Issai Schur}, 177--223, Progress in Mathematics  {\bf 210},
Birkh\"auser 2003.
\bibitem{Nakajima1998} H.~Nakajima, \textit{Quiver varieties and Kac-Moody algebras}, Duke Math. J. {\bf 91} (1998), 515--560.
\bibitem{PX2000} L.~Peng and J.~Xiao, \textit{Triangulated categories and Kac-Moody algebras}, Invent. Math. {\bf 140} (2000), 563--603.
\bibitem{Ringel1990} C.M.Ringel, \textit{Hall algebras and quantum groups}, Invent. Math. {\bf 101} (1990), 583--592.
\bibitem{Toen2005} B.~To\"en, \textit{Derived Hall algebras}, Duke Math. J. {\bf 135} (2006), no. 3, 587--615.
\bibitem{XX2006}J. Xiao and F. Xu, \textit{Hall algebras associated to triangulated categories}, Duke Math. J. {\bf 143} (2008),
no. 2, 357--373.

\end{thebibliography}

\end{document}